\newtheorem{theorem}{Theorem}
\newtheorem{corollary}[theorem]{Corollary}
\newtheorem{prop}[theorem]{Proposition}
\newtheorem{remark}[theorem]{Remark}
\newtheorem{lemma}[theorem]{Lemma}
\def\({\left(}
\def\){\right)}
\def\re{\mathbb{R}}
\def\N{\mathbb{N}}
\def\Sp{\mathbb{S}}
\def\pd{\partial}
\def\disp{\displaystyle}
\def\intRN{\int_{\mathbb{R}^N}}
\def\tcr{\relax}
\def\bu{{\bm u}}
\def\bv{{\bm v}}
\def\bx{{\bm x}}
\def\tl{\tageq\label}
\newcommand*\tageq{\refstepcounter{equation}\tag{\theequation}}
\begin{document}
\title[Rellich-Hardy inequality for curl-free fields]{A curl-free improvement of the Rellich-Hardy inequality with weight\\}
\author[N. Hamamoto \and F. Takahashi]{Naoki Hamamoto \and Futoshi Takahashi}
\address{%
\begin{tabular}{l}
Osaka City University Advanced Mathematical Institute, 
\\ 3-3-138 Sugimoto, Sumiyoshi-ku, Osaka 558-8585, Japan
\end{tabular}
}
\email{yhjyoe@yahoo.co.jp {\rm (N.Hamamoto)}} 
\email{futoshi@sci.osaka-cu.ac.jp {\rm (F.Takahashi)}}

\begin{abstract}
We consider the best constant in the Rellich-Hardy inequality (with a radial power weight) for curl-free vector fields on $\mathbb{R}^N$, 
originally found by Tertikas-Zographopoulos \cite{Tertikas-Z} for unconstrained fields. 
\tcr{This inequality is considered as} an intermediate between Hardy-Leray and Rellich-Leray inequalities. 
Under the curl-free condition, we compute the new explicit \tcr{best constant in the inequality and prove the non-attainability of the constant.} 
This paper is a sequel to \cite{CF_MAAN,CF_Re}.  
\end{abstract}

\subjclass[2020]{Primary 26D10; Secondary 35A23.}

\keywords{Rellich-Hardy inequality, curl-free vector fields, best constant.}


\maketitle

\section{Introduction}

Let $N \in \N$ be an integer with $N \ge 2$, let $\gamma \in \re$ and put $\bx=(x_1,\cdots,x_N) \in \re^N$. 
In the following, the notation $\mathcal{D}_{\gamma}(\mathbb{R}^N)$ denotes the set of \tcr{compact supported,} smooth vector fields on $\mathbb{R}^N$ 
\[
\bu = (u_1,u_2,\cdots, u_N): \mathbb{R}^N \ni \bx \mapsto \bu(\bx) \in \re^N
\]
with the integrability condition \[\int_{\mathbb{R}^N}|{\bm u}|^2|{\bm x}|^{2\gamma-2}dx<\infty.\] 
\subsection{Preceding results and motivation} 

It is well known that the {\it Hardy-Leray inequality}
\begin{equation}
 \(\gamma + \tfrac{N}{2}-1 \)^2 \intRN \frac{|\bu|^2}{|\bx|^2} |\bx|^{2\gamma} d{x} \le \intRN |\nabla \bu|^2 |\bx|^{2\gamma} d{x}
\label{Hardy}
\end{equation}
holds for any vector field $\bu \in \mathcal{D}_\gamma(\mathbb{R}^N)$, with the best constant on the left-hand side.
This was first proved by J. Leray \cite{Leray} for $(N,\gamma)=(3,0)$, as a higher-dimensional extension of Hardy's inequality in one dimension \tcr{\cite{Hardy}}, see also the book by Ladyzhenskaya \cite{Ladyzhenskaya}. 
Costin and Maz'ya \cite{Costin-Mazya} improved the best value of the constant by assuming ${\bm u}$ to be divergence-free (under the additional assumption of axisymmetry for $N\ge3$):  for the case $N=2$, it was shown that the inequality
\[C_\gamma \int_{\mathbb{R}^2}\frac{|{\bm u}|^2}{|{\bm x}|^2}|{\bm x}|^{2\gamma}dx\le\int_{\mathbb{R}^2}|\nabla {\bm u}|^2|{\bm x}|^{2\gamma}dx\]
holds with the best constant $C_\gamma=\begin{cases} \frac{3+(\gamma-1)^2}{1+(\gamma-1)^2}\gamma^2&\(|\gamma+1|\le\sqrt{3}\)\\ \gamma^2+1&\(|\gamma+1|>\sqrt{3}\)\end{cases}$ for divergence-free vector fields ${\bm u}\in\mathcal{D}_\gamma(\mathbb{R}^2)$. 
Since there is an isometry on $\mathbb{R}^2$ between divergence-free fields and curl-free fields, 
the same conclusion also holds for the two-dimensional curl-free fields. 
As a generalization of this result, we have derived in recent papers \cite{CF_MAAN,CF_Re} the Hardy-Leray inequality
\begin{equation}
\label{HL}
 H_{N,\gamma}\int_{\mathbb{R}^N}\frac{|{\bm u}|^2}{|{\bm x}|^2}|{\bm x}|^{2\gamma}dx\le\int_{\mathbb{R}^N}|\nabla {\bm u}|^2|{\bm x}|^{2\gamma}dx
\end{equation}
for curl-free fields ${\bm u}\in\mathcal{D}_\gamma(\mathbb{R}^N)$ with the best constant 
\begin{equation}
\label{HNg}
H_{N,\gamma} = 	\begin{cases}	\(\gamma + \frac{N}{2}-1\)^2 \frac{3(N-1) + \(\gamma + \frac{N}{2}-2 \)^2}{N-1 + \(\gamma+\frac{N}{2}-2\)^2} & \text{if} \quad \big| \gamma + \frac{N}{2} \big| \le \sqrt{N+1},\\ 	\(\gamma + \frac{N}{2}-1\)^2 + N-1 & \text{otherwise.}
\end{cases}\end{equation}
Since $C_\gamma=H_{2,\gamma}$, this result recovers Costin-Maz'ya's one for $N = 2$. 

The {\it Rellich-Leray inequality} is given by
\begin{equation} 
\label{Rellich}
	B_{N,\gamma}\int_{\mathbb{R}^N}\frac{|{\bm u}|^2}{|{\bm x}|^4}|{\bm x}|^{2\gamma}dx\le\int_{\mathbb{R}^N}|\triangle {\bm u}|^2|{\bm x}|^{2\gamma}dx
\end{equation}
for unconstrained fields ${\bm u}\in \mathcal{D}_{\gamma-1}(\mathbb{R}^N)$, where the constant $B_{N,\gamma}$  is sharp when 
\begin{equation} 
\label{RLB}
B_{N,\gamma}=\min_{\nu\in\mathbb{N}\cup\{0\}}\(\(\gamma-1\)^2-\(\nu+\tfrac{N}{2}-1\)^2\)^2.
\end{equation}
This was found by Rellich \cite{Rellich} for $\gamma=0$ and  Caldiroli-Musina \cite{Caldiroli} for $\gamma\ne 0$. 
In recent papers \cite{CF_MAAN,CF_Re}, we additionally considered the curl-free improvement of the Rellich-Leray inequality:  if ${\bm u}\in\mathcal{D}_{\gamma-1}(\mathbb{R}^N)$ is assumed to be curl-free, then the same inequality \eqref{Rellich} holds with the best constant $B_{N,\gamma}$ \tcr{:} 
\begin{equation}
\label{RL}
 B_{N,\gamma}=\min\left\{\((\gamma-1)^2-\tfrac{N^2}{4}\)^2,\ \min_{\nu\in\mathbb{N}} \tfrac{\(\gamma+\frac{N}{2}-1\)^2+\alpha_\nu}{\(\gamma+\frac{N}{2}-3\)^2+\alpha_\nu}\((\gamma-2)^2-\(\nu+\tfrac{N}{2}-1\)^2\)^2\right\};
\end{equation}
here and hereafter we use the notation 
\begin{equation}
\label{alpha_s}
	 \alpha_{s}={s}({s}+N-2)
\end{equation}
for any ${s}\in\mathbb{R}$. 

In this paper, \tcr{we are interested in} 
another version of Rellich-Leray inequality \cite{Beckner, GM, Tertikas-Z}: 
\begin{equation}
\label{RH}
 \int_{\mathbb{R}^N}|\triangle {\bm u}|^2|{\bm x}|^{2\gamma}dx\ge A_{N,\gamma}\int_{\mathbb{R}^N}\frac{|\nabla {\bm u}|^2}{|{\bm x}|^2}|{\bm x}|^{2\gamma}dx,\qquad\forall {\bm u}\in\mathcal{D}_{\gamma-1}(\mathbb{R}^N)
\end{equation}
holds with the best constant $\displaystyle A_{N,\gamma}=\min_{\nu\in\mathbb{N}\cup\{0\}}A_{N,\gamma,\nu}$, where 
\begin{equation}
\label{ANg}
A_{N,\gamma,\nu}:=\left\{\begin{array}{cl} \(\gamma-\frac{N}{2}\)^2 &\text{for }\nu=0,\vspace{0.5em}\\\frac{\(\(\gamma-1\)^2-\(\nu+\frac{N}{2}-1\)^2 
\)^2}{\(\gamma+\frac{N}{2}-2\)^2+\alpha_\nu}\ 
&\text{for }\nu\in\mathbb{N}.\end{array}\right.
\end{equation}
We call \eqref{RH} the  {\it Rellich-Hardy inequality}. 
This inequality was first found for $N\ge5$ by Tertikas-Zographopoulos \cite[Theorem 1.7]{Tertikas-Z}. 
Subsequently, Beckner \cite{Beckner} and Ghoussoub-Moradifam \cite{GM} established the same inequality when $N \in \{3, 4\}$ and $\gamma = 0$, 
with the best constants $A_{3,0} = \frac{25}{36}$ and $A_{4,0} = 3$.
See also Cazacu \cite{Cazacu_2019} for the unified proof of the inequality when $\gamma = 0$. 

Now, let us assume that $\gamma$ satisfies 
\[
	A_{N,\gamma}=\min_{\nu\in\mathbb{N}\cup\{0\}}A_{N,\gamma,\nu}=A_{N,\gamma,0} = \(\gamma-\tfrac{N}{2}\)^2.
\]
Then we see that a successive application of Rellich-Hardy and Hardy-Leray inequalities  reproduces Rellich-Leray inequality \eqref{Rellich}: we have
\[
\begin{split}
  \mathop{\int}_{\mathbb{R}^N}|\triangle {\bm u}|^2|{\bm x}|^{2\gamma}dx
&\ge A_{N,\gamma,0}\mathop{\int}_{\mathbb{R}^N}|\nabla {\bm u}|^2|{\bm x}|^{2(\gamma-1)}dx\qquad\qquad \ \ \qquad\(\text{from \eqref{RH}}\)
 \\&\ge A_{N,\gamma,0}\(\gamma+\tfrac{N}{2}-2\)^2\mathop{\int}_{\mathbb{R}^N}\frac{|{\bm u}|^2}{|{\bm x}|^2}|{\bm x}|^{2(\gamma-1)}dx
\quad \(\begin{array}{ll}
\small	 \text{from \eqref{Hardy}}
\\
\small\text{with $\gamma$ replaced by $\gamma-1$}\end{array}
\)
 \\&=\(\(\gamma-1\)^2-\(\tfrac{N}{2}-1\)^2\)^2\int_{\mathbb{R}^N}\frac{|{\bm u}|^2}{|{\bm x}|^4} |{\bm x}|^{2\gamma} dx
\\&\ge B_{N,\gamma}\int_{\mathbb{R}^N}\frac{|{\bm u}|^2}{|{\bm x}|^4} |{\bm x}|^{2\gamma} dx
\end{split}
\]
with $B_{N,\gamma}$ given by \eqref{RLB}. 
Hence,  Rellich-Hardy inequality can be considered as a stronger version of Rellich-Leray inequality 
and plays a role as an intermediate between Rellich-Leray and Hardy-Leray inequalities. 

In the context of the curl-free improvement, it seems also natural to ask whether 
the same phenomenon \tcr{will happen in \eqref{RH}}; in particular, \tcr{the main interest of our problem} is how the best constant in \eqref{RH} will change when ${\bm u}$ is assumed to be curl-free.

\subsection{Results}
Motivated by the observation above, we aim to derive the best constant in Rellich-Hardy inequality for curl-free fields. 
Now, our main result reads as follows:
\begin{theorem}
\label{theorem:RH}
Let $N \ge 2$. 
Let ${\bm u}\in\mathcal{D}_{\gamma-1}(\mathbb{R}^N)$ be a curl-free vector field. Then the inequality
\begin{equation}
\label{RH_cf}
\int_{\re^N}|\triangle {\bm u}|^2|{\bm x}|^{2\gamma}dx
\ge C_{N,\gamma}\int_{\re^N}\frac{|\nabla{\bm u}|^2}{|{\bm x}|^2}|{\bm x}|^{2\gamma}dx
\end{equation}
holds with the best constant $C_{N,\gamma}$ 
expressed as
\[
 C_{N,\gamma}=\min_{\nu\in\mathbb{N}\cup\{0\}}C_{N,\gamma,\nu},
\]
where
\begin{align*}
\tl{C0A1}  &   C_{N,\gamma,0}=\frac{\((\gamma-1)^2-\frac{N^2}{4}\)^2}{\(\gamma+\frac{N}{2}-2\)^2+N-1}=A_{N,\gamma,1}
  ,
  \\
   &   C_{N,\gamma,1}=
  \frac{\(\gamma-\frac{N}{2}-2\)^2\(\(\gamma+\frac{N}{2}-1\)^2+N-1\)}{\(\gamma+\frac{N}{2}-3\)^2+3(N-1)}
 \end{align*}
and
\[
 C_{N,\gamma,\nu}=
\tfrac{\((\gamma-2)^2-\(\nu+\frac{N}{2}-1\)^2\)^2\(\(\gamma+\frac{N}{2}-1\)^2+\alpha_\nu\)}{\((\gamma-2)^2-\(\nu+\frac{N}{2}-1\)^2\)^2+2(\gamma-1)\(\(2\gamma+N-5\)\alpha_\nu+(N-1)\(\gamma+\frac{N}{2}-3\)^2\)}
\quad\text{ for }\nu\ge2.
\]
\end{theorem}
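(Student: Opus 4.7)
\medskip

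\noindent\textbf{Proof plan.}
The plan is to convert the curl-free condition into a scalar reduction via a potential, decompose into spherical harmonic modes, and then solve a family of sharp one-dimensional weighted inequalities. Since $\bu\in\mathcal{D}_{\gamma-1}(\mathbb{R}^N)$ is curl-free on $\mathbb{R}^N$ with $N\ge 2$, I would first write $\bu=\nabla\phi$ for a scalar potential $\phi$; the curl-free identities
\[
\triangle\bu=\nabla(\triangle\phi),\qquad |\nabla\bu|^{2}=|\nabla^{2}\phi|^{2}
\]
then turn \eqref{RH_cf} into a fourth-order inequality for $\phi$. Expanding $\phi(r,\omega)=\sum_{\nu\ge 0}\sum_{k} f_{\nu,k}(r)Y_{\nu,k}(\omega)$ in spherical harmonics with $-\triangle_{\sigma}Y_{\nu,k}=\alpha_{\nu}Y_{\nu,k}$ and $\{Y_{\nu,k}\}$ orthonormal on $\mathbb{S}^{N-1}$, I would reduce both sides of \eqref{RH_cf} to orthogonal sums over modes.

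For each mode define $L_{\nu}f:=f''+\tfrac{N-1}{r}f'-\tfrac{\alpha_{\nu}}{r^{2}}f$, so $\triangle(fY_{\nu})=(L_{\nu}f)Y_{\nu}$ and hence
\[
\int_{\mathbb{R}^N}|\triangle\bu|^{2}|\bx|^{2\gamma}dx
=\sum_{\nu,k}\int_{0}^{\infty}\Bigl(\bigl((L_{\nu}f_{\nu,k})'\bigr)^{2}+\tfrac{\alpha_{\nu}}{r^{2}}(L_{\nu}f_{\nu,k})^{2}\Bigr)\,r^{N-1+2\gamma}dr.
\]
A direct computation of the Hessian $\nabla^{2}(fY_{\nu})$ in a moving frame $(\hat r,e_{1},\dots,e_{N-1})$, followed by sphere integration, writes
\[
\int_{\mathbb{R}^N}\frac{|\nabla\bu|^{2}}{|\bx|^{2}}|\bx|^{2\gamma}dx=\sum_{\nu,k}\int_{0}^{\infty}Q_{\nu}(f_{\nu,k})\,r^{N-3+2\gamma}dr
\]
for an explicit quadratic form $Q_{\nu}$ in $(f,f',f'')$ whose coefficients depend on $\nu$, $N$ and $\alpha_{\nu}$. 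By orthogonality, it suffices to prove, for every $\nu\ge 0$, the sharp one-dimensional inequality
\[
\int_{0}^{\infty}\!\Bigl(\bigl((L_{\nu}f)'\bigr)^{2}+\tfrac{\alpha_{\nu}}{r^{2}}(L_{\nu}f)^{2}\Bigr)r^{N-1+2\gamma}dr\;\ge\;C_{N,\gamma,\nu}\int_{0}^{\infty}\!Q_{\nu}(f)\,r^{N-3+2\gamma}dr.
\]

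To locate the best constant, I would test powers $f=r^{a}$: both sides become rational multiples of the same (formally divergent) monomial $r^{2a+2\gamma+N-7}$, so $C_{N,\gamma,\nu}$ appears as the minimum over admissible $a$ of an explicit rational function of $a$, whose critical point is fixed by scaling against the weight. The three cases in the theorem then fall out of the resulting algebra: the case $\nu=0$ collapses the fourth-order problem to a second-order one of the same structural type as in the unconstrained Rellich--Hardy problem at mode $\nu=1$, which explains the identity $C_{N,\gamma,0}=A_{N,\gamma,1}$; the case $\nu=1$ is singled out because $\alpha_{1}=N-1$ produces special cancellations in $Q_{1}$; and $\nu\ge 2$ yields the generic expression. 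The passage from a test-function computation to the genuine inequality with sharp constant is then completed by a Picone-type decomposition: substituting $f=r^{a_{*}}h$ at the critical exponent $a_{*}$ and expanding, the integrand on the left equals $C_{N,\gamma,\nu}\,Q_{\nu}(f)\,r^{N-3+2\gamma}$ plus a manifestly non-negative remainder in $h$ and its derivatives, together with radial divergence terms which vanish after integration on $(0,\infty)$ thanks to the integrability built into $\mathcal{D}_{\gamma-1}(\mathbb{R}^N)$.

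The main obstacle, in my view, is the Hessian computation producing $Q_{\nu}$ and the subsequent square-completion. Evaluating $|\nabla^{2}(fY_{\nu})|^{2}$ and integrating over $\mathbb{S}^{N-1}$ requires careful bookkeeping of the tangential derivatives of $Y_{\nu}$, and this is precisely where the asymmetry among $\nu=0,1,\ge 2$ arises and dictates the three-branch structure of $C_{N,\gamma,\nu}$. Once $Q_{\nu}$ is in hand, identifying the critical exponent $a_{*}$ and arranging the Picone identity is a mechanical but lengthy algebraic task; the baroque shape of the $\nu\ge 2$ formula reflects the coupling between the numerator factor $((\gamma-2)^{2}-(\nu+\tfrac{N}{2}-1)^{2})^{2}$ coming from the fourth-order operator and the denominator correction produced by $Q_{\nu}$. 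Non-attainability is then immediate: the formal minimizer $f=r^{a_{*}}$ lacks compact support and fails the defining integrability condition of $\mathcal{D}_{\gamma-1}(\mathbb{R}^N)$, so the infimum defining $C_{N,\gamma}$ cannot be realized.
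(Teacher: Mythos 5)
The approach you sketch is conceptually the same as the paper's: write $\bu=\nabla\phi$, decompose into spherical-harmonic modes, reduce to a family of one-dimensional weighted inequalities, and identify the constant by a scaling (power-test) argument. The paper packages the decomposition differently — it first passes to $v=|\bx|^{1-\lambda}\bu$ with $\lambda=2-\tfrac N2-\gamma$, uses the $(f,\varphi)$-representation of Proposition~\ref{prop:CF} (the spherical mean of $\phi$ and its zero-mean remainder) rather than your direct expansion of $\phi$, and then applies the Emden substitution $t=\log r$ together with a Fourier transform in $t$ so that both sides become integrals of rational functions $Q_i(\tau^2,\alpha_\nu)/P_i(\tau^2,\alpha_\nu)$. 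Your ``test $f=r^a$'' step is precisely the $\tau\to 0$ limit in that picture, and your ``Picone/square-completion'' is the claim $\inf_{\tau>0}Q_1(\tau,\alpha_\nu)/P_1(\tau,\alpha_\nu)=Q_1(0,\alpha_\nu)/P_1(0,\alpha_\nu)$.

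The genuine gap is exactly there. You assert that after substituting $f=r^{a_*}h$ ``the integrand on the left equals $C_{N,\gamma,\nu}\,Q_\nu(f)\,r^{N-3+2\gamma}$ plus a manifestly non-negative remainder,'' but that non-negativity is not manifest and is in fact the only hard part of the theorem. In the paper this is Lemma~\ref{lemma:Q/P}, whose proof occupies all of \S\ref{subsec:Q1/P1} and \S\ref{sec:Q/P}: one must show $Q_1(\tau,a)P_1(0,a)-Q_1(0,a)P_1(\tau,a)\ge 0$ for all $\tau>0$ and $a\in\{\alpha_\nu\}_{\nu\ge1}$, and this reduces to the positivity of a cubic $G_0(a)$ and a quadratic $G_1(a)$ on the discrete set $\{\alpha_\nu\}$, which the authors establish by Taylor-expanding at $a=\alpha_1$ and $a=\alpha_2$, splitting into the cases $\gamma\le 1$, $\gamma>1$ with $N\ge3$, and $\gamma>1$ with $N=2$, and completing squares (with {\it Maxima} assistance). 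Identifying the candidate exponent $a_*$ by scaling gives only an upper bound for $C_{N,\gamma}$; the matching lower bound requires this positivity, and a power-test alone cannot deliver it. Two smaller omissions: you do not handle the degenerate case $\gamma=2-\tfrac N2$, where $P_1(0,\alpha_1)=0$ and the ratio at $\nu=1$, $\tau=0$ is not even defined (the paper treats this separately in \S\ref{subsec:P1_zero} and \S\ref{subsec:opt}); and the sharpness direction needs an explicit cut-off construction of admissible fields concentrating on the extremal mode, not just the observation that the formal minimizer $r^{a_*}$ lies outside $\mathcal D_{\gamma-1}$ — the paper builds $\bu_n$ from $h(\tfrac1n\log|\bx|)$ in \S\ref{subsec:opt}, and a similar density argument is needed to reduce to compact support on $\dot{\re}^N$ in the first place (\S\ref{subsec:compact_supp}).
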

Moreover, we obtain a stronger inequality by adding a remainder term to the right-hand side of \eqref{RH_cf}.
\begin{theorem}
\label{theorem:RH_c}
Let $C_{N,\gamma}$ be the same constant as in Theorem \ref{theorem:RH}. Then there exists an absolute constant $c>0$ such that the inequality
\begin{equation}
\label{RH_cfr}
\begin{split}
 \int_{\mathbb{R}^N}|\triangle {\bm u}|^2|{\bm x}|^{2\gamma}dx&-C_{N,\gamma}\int_{\mathbb{R}^N}\frac{|\nabla {\bm u}|^2}{|{\bm x}|^2}|{\bm x}|^{2\gamma}dx\\&\ge {c}\int_{\mathbb{R}^N}\left|\nabla\(|{\bm x}|^{2-\frac{N}{2}-\gamma}({\bm x}\cdot\nabla) \big(|{\bm x}|^{\gamma+\frac{N}{2}-2}{\bm u}\big)\)\right|^2|{\bm x}|^{2\gamma-2}dx
\end{split}
\end{equation}
holds for all curl-free fields ${\bm u}\in\mathcal{D}_{\gamma-1}(\mathbb{R}^N)$.
\end{theorem}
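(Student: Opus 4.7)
The starting observation is an algebraic simplification of the integrand of the remainder. A direct expansion gives
\[
 |\bx|^{2-\frac{N}{2}-\gamma}(\bx\cdot\nabla)\(|\bx|^{\gamma+\frac{N}{2}-2}\bu\)=(\bx\cdot\nabla)\bu+\(\gamma+\tfrac{N}{2}-2\)\bu,
\]
and, writing $\bu=\nabla\phi$ (possible by the curl-free hypothesis) and invoking the commutator identity $[\bx\cdot\nabla,\nabla]=-\nabla$, this further equals $\nabla\psi$, where $\psi:=(\bx\cdot\nabla)\phi+(\gamma+\tfrac{N}{2}-3)\phi$. Hence the right-hand side of~\eqref{RH_cfr} reduces to $c\int_{\re^N}|\nabla^2\psi|^2|\bx|^{2\gamma-2}dx$, an expression attached to scalar quantities of $\phi$ only.

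Next, I decompose $\phi(\bx)=\sum_{\nu\ge 0}\phi_\nu(r)\Phi_\nu(\sigma)$ in spherical harmonics, where $r=|\bx|$, $\sigma=\bx/|\bx|$, and $\{\Phi_\nu\}$ is an $L^2(\Sp^{N-1})$-orthonormal basis of eigenfunctions of the Laplace--Beltrami operator with eigenvalues $\alpha_\nu=\nu(\nu+N-2)$. By orthogonality, each of the three integrals in~\eqref{RH_cfr} splits into a sum over $\nu$ of weighted one-dimensional integrals in $\phi_\nu$ and $\psi_\nu(r)=r\phi_\nu'(r)+(\gamma+\tfrac{N}{2}-3)\phi_\nu(r)$. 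It therefore suffices to establish the mode-by-mode inequality
\[
 I_\nu(\phi_\nu)-C_{N,\gamma,\nu}J_\nu(\phi_\nu)\ge c\,R_\nu(\phi_\nu)\qquad\forall\nu\ge 0,
\]
where $I_\nu,J_\nu,R_\nu$ denote the mode-$\nu$ contributions of the three integrals; adding the nonnegative gap $(C_{N,\gamma,\nu}-C_{N,\gamma})J_\nu\ge 0$ afterwards upgrades the constant on the LHS from $C_{N,\gamma,\nu}$ to $C_{N,\gamma}$ and concludes.

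The third step is to prove this refined one-dimensional inequality. In the proof of Theorem~\ref{theorem:RH}, the identity $I_\nu\ge C_{N,\gamma,\nu}J_\nu$ emerges via an integration by parts that discards a nonnegative square. I plan to redo that calculation while keeping the square, using the substitution $\phi_\nu(r)=r^{3-\gamma-\frac{N}{2}}\eta_\nu(r)$, which normalises $\psi_\nu$ to $r^{4-\gamma-\frac{N}{2}}\eta_\nu'(r)$ and thereby expresses $R_\nu$ as a weighted $L^2$-integral of $\eta_\nu'$ of the same shape as the discarded square. Matching the two should yield $I_\nu-C_{N,\gamma,\nu}J_\nu\ge c'_\nu R_\nu$ with an explicit $c'_\nu>0$ depending on $\alpha_\nu$.

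The main obstacle is to secure $c=\inf_\nu c'_\nu>0$. For any mode $\nu^\ast$ at which $C_{N,\gamma}=C_{N,\gamma,\nu^\ast}$ is realised the gap vanishes and the intrinsic leftover alone must dominate $R_{\nu^\ast}$; this pins down the choice of weight in the integration by parts. For $\nu$ large, $I_\nu$ and $J_\nu$ scale like $\alpha_\nu^2$ while $R_\nu$ scales like $\alpha_\nu$, so the desired bound is comfortable there. The delicate regime is the intermediate one, near the transitions between the three branches of $C_{N,\gamma,\nu}$ in Theorem~\ref{theorem:RH}; there, a case analysis based on the explicit formulas for $C_{N,\gamma,\nu}$ will be needed to verify non-degeneracy of $c'_\nu$ and hence the strict positivity of the universal constant $c$.
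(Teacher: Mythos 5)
Your opening algebraic observations are correct and consistent with the paper's framework: indeed $|\bx|^{-\mu}(\bx\cdot\nabla)(|\bx|^{\mu}\bu)=(\bx\cdot\nabla)\bu+\mu\bu$ with $\mu=\gamma+\tfrac{N}{2}-2$, and with $\bu=\nabla\phi$ and the commutator $[\bx\cdot\nabla,\nabla]=-\nabla$ the remainder field is a gradient, namely $\nabla\psi$ with $\psi=(\bx\cdot\nabla)\phi+(\gamma+\tfrac{N}{2}-3)\phi$. The subsequent decomposition into spherical harmonics and reduction to a per-mode inequality, with the free boost $(C_{N,\gamma,\nu}-C_{N,\gamma})J_\nu\ge 0$, mirror exactly what the paper does after Fourier-transforming in $t=\log r$ (the quantity $|\bx|^{\lambda}\partial(|\bx|^{-\lambda}\bu)$ with $\lambda=2-\tfrac{N}{2}-\gamma$ in the paper is precisely your $(\bx\cdot\nabla)\bu+\mu\bu$, and your normalization $\phi_\nu=r^{\lambda+1}\eta_\nu$ corresponds to the paper's scalar fields $f,\varphi$ adapted to weight $\gamma-1$).

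The genuine gap is the step you relegate to ``a case analysis based on the explicit formulas.'' In the paper's language, the per-mode inequality you want, $I_\nu-C_{N,\gamma,\nu}J_\nu\ge cR_\nu$, is equivalent to the statement that for the polynomials $P_1,Q_1$ of \eqref{poly:P},\eqref{poly:Q} one has $\frac{1}{\tau}\bigl(\frac{Q_1(\tau,\alpha_\nu)}{P_1(\tau,\alpha_\nu)}-\frac{Q_1(0,\alpha_\nu)}{P_1(0,\alpha_\nu)}\bigr)\ge c_0>0$ uniformly in $\tau>0$ and $\nu\in\mathbb{N}$ (together with an analogous, easy, bound for the radial mode via $Q_0/P_0$). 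This is the entire content of Lemma \ref{lemma:Q/P}, whose verification occupies most of Sections \ref{section3}, \ref{sec:Q/P}, and the Appendix: it reduces to proving nonnegativity, for $a\in\{\alpha_\nu\}_\nu$, of certain cubic/quadratic polynomials $G_0,G_1$ (and of their modifications $E_0,E_1$ or $F_0,F_1$ when $\gamma>1$), and this is done by Taylor-expanding at $a=\alpha_1$ or $\alpha_2$ and verifying sign conditions coefficient by coefficient, by completing squares, and by a separate parameterization of $\lambda$ in several regimes. Nothing in your sketch engages with this; the phrase ``redo that calculation while keeping the square'' does not identify the normalization, the split into $\gamma\le 1$ versus $\gamma>1$, nor why the degenerate mode $\nu=1$ at $\lambda=0$ (where $P_1(0,\alpha_1)=0$) is harmless. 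Until that uniformity is actually established, there is no proof.

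One further inaccuracy worth flagging: your heuristic that ``$I_\nu,J_\nu$ scale like $\alpha_\nu^2$ while $R_\nu$ scales like $\alpha_\nu$'' is wrong. With $\psi_\nu(r)\Phi_\nu(\sigma)$, the quantity $\int_{\Sp^{N-1}}|D^2\psi_\nu|^2\,\mathrm{d}\sigma$ picks up a $\triangle_\sigma^2$-type contribution and scales like $\alpha_\nu^2$, same as $J_\nu$; meanwhile $I_\nu$ scales like $\alpha_\nu^3$ but $C_{N,\gamma,\nu}\sim\alpha_\nu$, so the leading $\alpha_\nu^3$ cancels in $I_\nu-C_{N,\gamma,\nu}J_\nu$ and the comparison to $R_\nu$ is order-for-order tight even as $\nu\to\infty$. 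The large-$\nu$ regime is therefore not ``comfortable''; the uniform constant is exactly what is at stake, and this is why the paper invests so heavily in Lemma \ref{lemma:Q/P}.
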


\begin{remark}
From the proof 
below, we see that the constant $c$ on the right-hand side of \eqref{RH_cfr} can be  estimated by 
 \begin{alignat*}{3}
  &c\ge1  &&\text{\rm when}\quad  \gamma\le1, 
  \\
  &c \ge 1/2 &&\text{\rm when}\quad  N \ge 3 \quad\text{\rm and}\quad \gamma>1,
  \\
  &c \ge 1/3 \quad&&\text{\rm when}\quad N=2\quad\text{\rm and}\quad  \gamma>1 .
 \end{alignat*}
However, the best possible (the largest) value of $c$ is unknown.
\end{remark}

As a direct consequence of Theorem \ref{theorem:RH_c}, we can conclude that the best constant $C_{N,\gamma}$ of the inequality \eqref{RH_cf} is never attained in $\mathcal{D}_{\gamma-1}(\mathbb{R}^N)\setminus\{{\bm 0}\}$: 

\begin{corollary}
\label{corollary:RH_c}
If the equation
\[
	\int_{\re^N}|\triangle {\bm u}|^2|{\bm x}|^{2\gamma}dx = C_{N,\gamma}\int_{\re^N}\frac{|\nabla{\bm u}|^2}{|{\bm x}|^2}|{\bm x}|^{2\gamma}dx
\]
holds for a curl-free field ${\bm u}\in\mathcal{D}_{\gamma-1}(\mathbb{R}^N)$, then ${\bm u}\equiv {\bm 0}$.
\end{corollary}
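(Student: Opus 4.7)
The plan is to derive Corollary \ref{corollary:RH_c} directly from Theorem \ref{theorem:RH_c}. Assume that the equality $\int_{\re^N}|\triangle \bu|^2|\bx|^{2\gamma}dx = C_{N,\gamma}\int_{\re^N}\frac{|\nabla\bu|^2}{|\bx|^2}|\bx|^{2\gamma}dx$ holds for some curl-free $\bu\in\mathcal{D}_{\gamma-1}(\re^N)$. Then the left-hand side of \eqref{RH_cfr} is zero, while the right-hand side is a non-negative multiple (with constant $c>0$) of a weighted $L^2$-norm. Consequently that norm vanishes, and I would conclude
\[
\nabla\(|\bx|^{2-\frac{N}{2}-\gamma}(\bx\cdot\nabla)\big(|\bx|^{\gamma+\frac{N}{2}-2}\bu\big)\) \equiv {\bm 0} \quad \text{on } \re^N\setminus\{{\bm 0}\}.
\]

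Next, I would set $\bv:=|\bx|^{2-\frac{N}{2}-\gamma}(\bx\cdot\nabla)\big(|\bx|^{\gamma+\frac{N}{2}-2}\bu\big)$, a vector field that is smooth on the connected open set $\re^N\setminus\{{\bm 0}\}$ (using $N\ge 2$). The vanishing of $\nabla\bv$ forces $\bv$ to be a constant vector there, and compactness of $\operatorname{supp}\bu$ implies $\bv({\bx})={\bm 0}$ for $|\bx|$ large enough, so this constant is $\bm 0$ and $\bv\equiv{\bm 0}$.

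Finally, I would interpret the resulting identity $(\bx\cdot\nabla)\big(|\bx|^{\gamma+\frac{N}{2}-2}\bu\big)={\bm 0}$ as saying that the vector field ${\bm w}(\bx):=|\bx|^{\gamma+\frac{N}{2}-2}\bu(\bx)$ has vanishing radial derivative on $\re^N\setminus\{{\bm 0}\}$, hence is homogeneous of degree zero and is therefore determined by its restriction to the unit sphere. Compact support of $\bu$ forces ${\bm w}$ to vanish for $|\bx|$ large, which means its restriction to the unit sphere is identically zero; by homogeneity ${\bm w}\equiv{\bm 0}$ on $\re^N\setminus\{{\bm 0}\}$, and so $\bu\equiv{\bm 0}$.

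The genuine content is Theorem \ref{theorem:RH_c} itself; the deduction above is a routine unpacking. The only mildly subtle bookkeeping is that everything must be done on the punctured space $\re^N\setminus\{{\bm 0}\}$, and the compact support of $\bu$ is used twice---first to eliminate the additive constant arising from $\nabla\bv={\bm 0}$, and second to rule out a nontrivial angular profile in the degree-zero homogeneity of ${\bm w}$.
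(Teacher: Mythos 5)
Your proof is correct and rests on the same core observation as the paper's: the vanishing of the remainder forces $\bv:=|\bx|^{2-\frac{N}{2}-\gamma}(\bx\cdot\nabla)\big(|\bx|^{\gamma+\frac{N}{2}-2}\bu\big)$ to be a constant vector on $\re^N\setminus\{\bm 0\}$, and then radial considerations kill $\bu$. The execution, however, is genuinely cleaner. The paper leaves the constant vector $\bx_0$ unspecified, integrates the resulting radial ODE explicitly, and is then forced into a case analysis $\gamma\ne 2-\frac{N}{2}$ versus $\gamma=2-\frac{N}{2}$ (with different limits $s\to 0$ or $s\to\infty$), finally invoking the integrability condition to kill the residual constant. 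You instead observe at once that $\bv\equiv\bm 0$ — since $\bu$ is compactly supported in $\re^N$, $\bv$ vanishes outside a ball, and a constant vector vanishing somewhere vanishes everywhere — after which $(\bx\cdot\nabla)\big(|\bx|^{\gamma+\frac{N}{2}-2}\bu\big)=\bm 0$ says that $\bm w=|\bx|^{\gamma+\frac{N}{2}-2}\bu$ is constant along rays; compact support again forces $\bm w\equiv\bm 0$. This bypasses both the explicit integration and the case split on $\gamma$, and does not need the integrability condition beyond what compact support already provides. The only thing worth stating explicitly (which you do) is that $\re^N\setminus\{\bm 0\}$ is connected when $N\ge 2$, so that $\nabla\bv\equiv\bm 0$ really does imply $\bv$ is globally constant there.
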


\begin{proof}
Let \eqref{RH_cf} holds true. Then the right-hand side of \eqref{RH_cfr} must vanish. 
Thus 
\[
	{r}\partial_{r}\({r}^{\gamma+\frac{N}{2}-2}{\bm u}({r}{\bm \sigma})\) ={r}^{\gamma+\frac{N}{2}-2}{\bm x}_0
\]
holds for some constant vector filed ${\bm x_0}$, where $({r},{\bm \sigma})=(|{\bm x}|,{\bm x}/|{\bm x}|)$.
 Integrating both sides on any interval $[{s},{r}]\subset\mathbb{R}_+$ with respect to the measure $\frac{1}{{r}}d{r}$,
we have
\[
  {\bm u}({r}{\bm \sigma})=
  \begin{cases}
   \(\frac{{s}}{{r}}\)^{\gamma+\frac{N}{2}-2}{\bm u}({s}{\bm \sigma})+\frac{\(\frac{{s}}{{r}}\)^{\gamma+\frac{N}{2}-2}-1}{\gamma+\frac{N}{2}-2}{\bm x}_0&(\gamma\ne2-\tfrac{N}{2})
\vspace{0.5em}   \\
   {\bm u}({s}{\bm \sigma})+{\bm x}_0\log \frac{{s}}{{r}}&(\gamma=2-\tfrac{N}{2})
  \end{cases}.
\]
 In the case $\gamma\ne2-\frac{N}{2}$, take the limit $s \to \infty$ (resp. $s \to +0$) when $\gamma<2-\frac{N}{2}$ (resp. $\gamma>2-\frac{N}{2}$),
then we obtain
\[
 {\bm u}({r}{\bm \sigma})\equiv \tfrac{-1}{\gamma+\frac{N}{2}-2}{\bm x}_0\quad\text{ and hence }\quad {\bm u}({\bm x})\equiv {\bm u}({\bm 0}).
\]
 This fact together with the integrability condition $\int_{\mathbb{R}^N}|{\bm u}|^2|{\bm x}|^{2\gamma-4}dx<\infty$ says that
 ${\bm u}({\bm 0})$ must vanish, whence ${\bm u}\equiv{\bm 0}$. 
In the case $\gamma= 2-\frac{N}{2}$,  taking  ${s}\to0$ leads to
\[
 {\bm u}({r}{\bm \sigma})={\bm u}({\bm 0})+{\bm x}_0\lim_{{s}\to0}\log \frac{{s}}{{r}}
\]
and the finiteness of the right-hand side yields ${\bm x}_0={\bm 0}$. 
 Therefore,  we see again that ${\bm u}\equiv {\bm u}({\bm 0})$ and hence ${\bm u}\equiv {\bm 0}$. 
\end{proof}

As another direct consequence  of Theorem \ref{theorem:RH_c}, we have the following fact: 
\begin{corollary}
Let $C_{N,\gamma}$ be the same constant as in Theorem \ref{theorem:RH}. Then there exists an absolute constant $c>0$ such that the inequality
\begin{equation*}
\begin{split}
 \int_{\mathbb{R}^N}|\nabla \triangle \phi|^2|{\bm x}|^{2\gamma}dx&-C_{N,\gamma}\int_{\mathbb{R}^N}\frac{|D^2 \phi|^2}{|{\bm x}|^2}|{\bm x}|^{2\gamma}dx
\\&\ge {c}\int_{\mathbb{R}^N}\left|\nabla\(|{\bm x}|^{2-\frac{N}{2}-\gamma}({\bm x}\cdot\nabla) \big(|{\bm x}|^{\gamma+\frac{N}{2}-2} \nabla \phi \big)\)\right|^2|{\bm x}|^{2\gamma-2}dx
\end{split}
\end{equation*}
holds for all scalar field $\phi$ such that $\nabla \phi \in\mathcal{D}_{\gamma-1}(\mathbb{R}^N)$.
Here $D^2 \phi$ denotes a Hessian matrix of $\phi$.
\end{corollary}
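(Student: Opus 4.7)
The strategy is to deduce the corollary as an immediate specialization of Theorem \ref{theorem:RH_c} to the curl-free vector field $\bu := \nabla \phi$. Any gradient field is automatically curl-free, and the assumption $\nabla \phi \in \mathcal{D}_{\gamma-1}(\mathbb{R}^N)$ supplies the smoothness, compact support, and weighted integrability that Theorem \ref{theorem:RH_c} demands of $\bu$. Hence the hypotheses of Theorem \ref{theorem:RH_c} are all verified for this particular choice of $\bu$, and the inequality \eqref{RH_cfr} applies verbatim.

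With $\bu = \nabla \phi$, the two pointwise identities $\nabla \bu = \nabla(\nabla \phi) = D^2 \phi$ and $\triangle \bu = \triangle(\nabla \phi) = \nabla(\triangle \phi)$ (the latter from the commutativity of $\triangle$ and $\nabla$ on smooth scalar fields) rewrite the left-hand side of \eqref{RH_cfr} term-by-term into the left-hand side of the corollary, while the right-hand side of \eqref{RH_cfr} already contains $\bu = \nabla \phi$ in the precise form demanded and so requires no further manipulation. The claimed inequality follows with the same absolute constant $c$ as in Theorem \ref{theorem:RH_c}. In this sense there is no substantive obstacle: the corollary is simply the scalar-potential reformulation of the vector-valued remainder inequality \eqref{RH_cfr}, and the only elementary observation needed is that $\triangle$ and $\nabla$ commute on smooth scalars.
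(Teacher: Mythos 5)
Your proof is correct and matches what the paper intends: it states the corollary without proof as "another direct consequence of Theorem \ref{theorem:RH_c}," which is precisely the specialization $\bu = \nabla\phi$ you carry out. The two identities $\nabla(\nabla\phi) = D^2\phi$ and $\triangle\nabla\phi = \nabla\triangle\phi$ are the only observations needed, and you state both correctly.
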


\subsection*{Overview of the remaining content of the present paper}


The rest of this paper is organized as follows:
Section \ref{section2} provides a minimum required notations and definitions, and reviews a representation of curl-free fields. 
Section \ref{section3} gives the proof of Theorem \ref{theorem:RH}: we recall from \cite{CF_Re} the scalar-potential expression of $L^2$ integrals of curl-free fields; 
after that, we derive Lemma \ref{lemma:Q/P} as a key tool for evaluating the ratio of the two integrals in \eqref{RH_cf}, which also plays a computational part in the proof of Theorem \ref{theorem:RH_c}.
The proof of Lemma \ref{lemma:Q/P} is separated into two cases. Since both the cases use similar techniques and consist of long calculations, we prove only one case in the same section, and  postpone the other case in Section \ref{sec:Q/P}.
Section \ref{sec:RH_c} proves Theorem \ref{theorem:RH_c} by using an operator-polynomial representation of Rellich-Hardy integral quotient and by making full use of Lemma \ref{lemma:Q/P}. 
Section \ref{sec:observation} observes curl-free improvement phenomena of best constants in some cases.

\section{Preliminary for the proof of main theorem}
\label{section2}

\subsection{Notations and definitions in vector calculus on $\dot{\mathbb{R}}^N$}
Here we summarize the minimum required notations and definitions for the proof of our main theorems. 
We basically use the notation
\[
 \dot{\mathbb{R}}^N=\mathbb{R}^N\setminus\{{\bm 0}\}\qquad\text{and}\qquad\mathbb{S}^{N-1}=\left\{{\bm x}\in\mathbb{R}^N :\ |{\bm x}|=1\right\}.
\]
For every vector ${\bm x}\in\dot{\mathbb{R}}^N$, the notation
\[
 {r}=|{\bm x}|>0,\quad {\bm \sigma}={\bm x}/|{\bm x}|\in\mathbb{S}^{N-1}
\]
denotes the radius of ${\bm x}$  and its unit-vector part, which defines the smooth transformation 
\[
\dot{\mathbb{R}}^N\to\re_+\times\Sp^{N-1} , \quad 
	{\bm x}\mapsto ({r},{\bm \sigma})
\]
together with its inverse
\[
\mathbb{R}_+\times\mathbb{S}^{N-1}
\to
\dot{\mathbb{R}}^N ,\quad 
 ({r},{\bm \sigma})\mapsto{r}{\bm \sigma}.
  \]
Every vector field ${\bm u} = (u_1,u_2,\cdots,u_N):\dot{\mathbb{R}}^N \to \re^N$ has its radial scalar component $u_R=u_R({\bm x})$ and spherical vector part ${\bm u}_S={\bm u}_S({\bm x})$ given by the formulae
\[
	{\bm u}={\bm \sigma}u_R + {\bm u}_S ,\quad\ {\bm \sigma} \cdot {\bm u}_S = 0
\]
for all ${\bm x}\in\dot{\mathbb{R}}^N$;  the two fields are explicitly given by \[u_R={\bm \sigma}\cdot {\bm u} \quad \text{ and }\quad {\bm u}_S={\bm u}-{\bm \sigma}u_R. \]
In a similar way, the gradient operator $\nabla=\(\frac{\partial}{\partial x_1},\cdots,\frac{\partial}{\partial x_N}\)$ can be decomposed into the radial derivative $\partial_{r}$ and the spherical gradient $\nabla_{\!\sigma}$ as
\[
 \nabla={\bm \sigma}\partial_{r}+\frac{1}{{r}}\nabla_{\!\sigma},
\]
in order that $\partial_{r}f=(\nabla f)_R={\bm \sigma}\cdot\nabla f$ and $\frac{1}{{r}}\nabla_{\!\sigma}f=(\nabla f)_S$ for all $f\in C^\infty(\dot{\mathbb{R}}^N)$.  
The notation
\begin{equation}
\label{der} \partial: ={r}\partial_{r}={\bm x}\cdot\nabla
\end{equation}
denotes an alternative radial derivative, in order that the above decomposition formula of $\nabla$ can be rewritten as
\[{r}\nabla={\bm \sigma}\partial+\nabla_{\!\sigma}.\] 
The Laplace operator $\triangle = \sum_{k=1}^N \pd^2/\pd x_k^2$ is known to be expressed in terms of $({r},{\bm \sigma})$ by the formula 
\[
	 \triangle = \frac{1}{{r}^{N-1}} \pd_{r} \( {r}^{N-1} \pd_{r} \) + \frac{1}{{r}^2} \triangle_\sigma 
=\frac{1}{r^2}\(\partial^2 + (N-2)\partial + \triangle_\sigma\),
\]
where $\triangle_\sigma$ denotes the Laplace-Beltrami operator on $\Sp^{N-1}$. 
We understand that the action of the operator $\partial_{r}$ or $\partial$ on a 
vector field ${\bm u}$ is associated with the function ${r}\mapsto {\bm u}({r}{\bm \sigma})$ for ${\bm \sigma}\in\mathbb{S}^{N-1}$ fixed, 
whereas $\nabla_{\!\sigma}$ or $\triangle_\sigma$ is associated with the function ${\bm \sigma}\mapsto {\bm u}({r}{\bm \sigma})$ for ${r}=|{\bm x}|$ fixed. 
As a simple example, the operation of $\nabla$ and $\triangle$ on the scalar field ${r}=|{\bm x}|$ or its powers gives 
$\nabla{r}={\bm \sigma}$ and $\triangle{r}^s=\alpha_s{r}^{s-2}$ for all $s\in\mathbb{R}$, where $\alpha_s$ is the same as in \eqref{alpha_s}.

\subsection{Radial-spherical-scalar representation of curl-free fields } 
Every vector field ${\bm u}\in C^\infty(\mathbb{R}^N)^N$ is said to be curl-free if 
\[
	\frac{\pd u_k}{\pd x_j}=\frac{\pd u_j}{\pd x_k} \quad \text{ on }\mathbb{R}^N\qquad\forall j,k \in \{1,\cdots,N \}, 
\]
or equivalently if there exists a scalar field $\phi\in C^\infty(\mathbb{R}^N)$ satisfying
\begin{equation}
\label{phi}
 \bu=\nabla \phi\quad\text{ on }\mathbb{R}^N.
\end{equation}
In view of this equation, we say that ${\bm u}$ has a scalar potential $\phi$. As another representation of curl-free fields, let us recall the following fact:
\begin{prop}[\cite{CF_Re}]
\label{prop:CF}
Let $\lambda\in\re$. 
Then a vector field ${\bm u}\in C^\infty(\re^N)^N$ is curl-free 
if and only if there exist two scalar fields $f,\varphi\in C^\infty(\dot{\mathbb{R}}^N)$ satisfying
\[
\left\{
\begin{array}{cl}
f \text{ is radially symmetric and } \ \int_{\mathbb{S}^{N-1}}\varphi(r {\bm \sigma}) \mathrm{d}\sigma=0&\forall\, r>0,\vspace{0.75em}
 \\
{r}^{1-\lambda}{\bm u}=  {\bm \sigma} \big(f+(\lambda+\pd_{t})\varphi\big) + \nabla_{\!\sigma} \varphi \quad&{\rm on}\ \re^N \setminus\{{\bm 0}\} .
\end{array}
\right.
\]
Moreover, such $f$ and $\varphi$ are uniquely determined,  and they are explicitly given by the equations 
\[
 f={r}^{-\lambda}\partial\overline{\phi}\quad\text{ and }\quad
 \varphi={r}^{-\lambda}\(\phi-\overline{\phi}\),
\]
where we set $\overline{\phi}({\bm x})=\frac{1}{|\mathbb{S}^{N-1}|}\int_{\mathbb{S}^{N-1}}\phi(|{\bm x}|{\bm \sigma})\mathrm{d}\sigma$ as the spherical mean of the scalar potential $\phi$ given by \eqref{phi}. In particular, if ${\bm u}$ has a compact support on $\dot{\mathbb{R}}^N$, then so do $f$ and $\varphi$.
\end{prop}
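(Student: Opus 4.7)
The plan is to reduce the proposition to the existence of a scalar potential and then to decompose that potential into its spherical mean and its zero-mean angular part. Since $\mathbb{R}^N$ is simply connected, the curl-free assumption on ${\bm u}\in C^\infty(\mathbb{R}^N)^N$ is equivalent to the existence of $\phi\in C^\infty(\mathbb{R}^N)$ with ${\bm u}=\nabla\phi$, so the substantive task is the passage between $\phi$ and the pair $(f,\varphi)$.

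For the forward direction I write $\phi=\overline{\phi}+(\phi-\overline{\phi})$ and set
\[
 f:=r^{-\lambda}\partial\overline{\phi},\qquad \varphi:=r^{-\lambda}(\phi-\overline{\phi}),
\]
and verify the claimed identity via the decomposition $r\nabla={\bm\sigma}\partial+\nabla_{\!\sigma}$. The radial component follows from the Leibniz rule $\partial(r^{\lambda}\varphi)=r^{\lambda}(\lambda+\partial)\varphi$, which converts $\partial(\phi-\overline{\phi})$ into $r^{\lambda}(\lambda+\partial)\varphi$; the spherical component is immediate because $\nabla_{\!\sigma}\overline{\phi}=0$ and $\nabla_{\!\sigma}$ commutes with multiplication by $r^{\lambda}$. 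Combining these pieces yields $r^{1-\lambda}{\bm u}={\bm\sigma}(f+(\lambda+\partial)\varphi)+\nabla_{\!\sigma}\varphi$, and the radial symmetry of $f$ together with the zero spherical average of $\varphi$ are built into the definitions.

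For the reverse direction, given $(f,\varphi)$ satisfying the stated constraints, I construct $\phi$ explicitly as $\phi:=\overline{\phi}+r^{\lambda}\varphi$, where $\overline{\phi}$ is a radial primitive of $r^{\lambda}f$ in the sense $\partial\overline{\phi}=r^{\lambda}f$, obtained by a one-dimensional integration in $r$. A direct computation of $r\nabla\phi$ in the $({r},{\bm\sigma})$-decomposition then reproduces the prescribed expression for $r{\bm u}$, so ${\bm u}=\nabla\phi$ is curl-free. Uniqueness is read off by applying the radial spherical average $|\mathbb{S}^{N-1}|^{-1}\int_{\mathbb{S}^{N-1}}{\bm\sigma}\cdot(\cdot)\,\mathrm{d}\sigma$ to the identity: the tangential term $\nabla_{\!\sigma}\varphi$ drops out, and the zero-mean assumption on $\varphi$ combined with the commutation $\partial\int_{\mathbb{S}^{N-1}}=\int_{\mathbb{S}^{N-1}}\partial$ kills $(\lambda+\partial)\varphi$, leaving $f$ determined; then $\varphi$ is forced by the remaining part of the identity, and the explicit formulas $f=r^{-\lambda}\partial\overline{\phi}$ and $\varphi=r^{-\lambda}(\phi-\overline{\phi})$ follow from the construction.

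For the compact-support assertion, if ${\bm u}$ is supported in a compact subset of $\dot{\mathbb{R}}^N$, then ${\bm u}=0$ both in a punctured neighborhood of ${\bm 0}$ and outside some large ball. Normalizing $\phi$ so that $\phi\to 0$ at infinity, the potential $\phi$ is constant on each connected component where $\nabla\phi=0$; in particular $\phi\equiv c_{0}$ in some neighborhood of ${\bm 0}$, so $\overline{\phi}\equiv c_{0}$ and $\phi-\overline{\phi}\equiv 0$ there. Consequently both $f=r^{-\lambda}\partial\overline{\phi}$ and $\varphi=r^{-\lambda}(\phi-\overline{\phi})$ vanish near ${\bm 0}$ and at infinity. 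The calculations throughout are short and mechanical; the only point requiring care is the Leibniz identity $\partial(r^{\pm\lambda}\psi)=r^{\pm\lambda}(\partial\pm\lambda)\psi$ used repeatedly to move powers of $r$ past $\partial$, and I do not anticipate any serious obstacle.
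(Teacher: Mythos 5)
Your proof is correct and follows the natural decomposition: the paper itself does not reprove this proposition (it is quoted from \cite{CF_Re}), but your argument — reduce to the scalar potential $\phi$, split it as $\phi=\overline{\phi}+(\phi-\overline{\phi})$, apply the Leibniz identity $\partial(r^{\lambda}\psi)=r^{\lambda}(\lambda+\partial)\psi$ together with $r\nabla={\bm\sigma}\partial+\nabla_{\!\sigma}$, and read off uniqueness by projecting onto ${\bm\sigma}$ and averaging over the sphere — is exactly the standard route and all steps check out. The only point you gloss over slightly is in the ``if'' direction: the potential $\phi=\overline{\phi}+r^{\lambda}\varphi$ you build lives only on $\dot{\mathbb{R}}^N$, so one should add the one-line remark that curl-freeness on $\dot{\mathbb{R}}^N$ extends to the origin by continuity of the partial derivatives of the assumed-smooth ${\bm u}\in C^\infty(\mathbb{R}^N)^N$.
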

Later, we will use Proposition \ref{prop:CF} by choosing $\lambda = 2 - \frac{N}{2} - \gamma$.  (See \eqref{eta}).

\section{Proof of Theorem \ref{theorem:RH}}
\label{section3}

In this section, we prove Theorem \ref{theorem:RH}. 
Roughly speaking, the proof consists of theoretical part (\S\ref{subsec:compact_supp} and \S\ref{subsec:integrals}) and computational part (from \S\ref{subsec:Q0/P0} to \S\ref{subsec:opt}). 
Since the theoretical part is already well established in our previous work, we will only state its minimum required content: we exploit some $L^2$ formulae of curl-free fields given in \cite{CF_Re}.  
Instead, emphasis is placed on the computational part.

\subsection{Reduction to the case of compact support on $\dot{\mathbb{R}}^N$} 
\label{subsec:compact_supp}

Let $\phi$ be the scalar potential of the curl-free field ${\bm u}$ satisfying $\phi({\bm 0})=0$. 
The integrability condition ${\bm u}\in\mathcal{D}_{\gamma-1}(\mathbb{R}^N)$ \big(namely $\int_{\mathbb{R}^N}|{\bm u}|^2|{\bm x}|^{2\gamma-4}dx<\infty$\big) 
together with the smoothness of ${\bm u}$ on $\mathbb{R}^N$ implies that there exists an integer $m>\frac{3}{2}-\gamma$ satisfying
\[
 {\bm u}({\bm x})=\nabla\phi({\bm x})=O(|{\bm x}|^m),
\quad\text{ and hence }\quad
\left\{\begin{array}{l}
\phi({\bm x})=O(|{\bm x}|^{m+1}),
\\
 \nabla {\bm u}({\bm x})=O(|{\bm x}|^{m-1}),
\\
 \triangle {\bm u}({\bm x})=O(|{\bm x}|^{m-2})
\end{array}\right.
\]
as ${\bm x}\to {\bm 0}$. 
Then it additionally follows that the integrals 
\begin{equation}
\label{finite}
\int_{\mathbb{R}^N}\phi^2|{\bm x}|^{2\gamma-6}dx,\quad
\int_{\mathbb{R}^N}|\nabla {\bm u}|^2|{\bm x}|^{2\gamma-2}dx\quad\text{ and }\quad\int_{\mathbb{R}^N}|\triangle {\bm u}|^2|{\bm x}|^{2\gamma}dx
\end{equation}
are all finite. 

For the purpose of deriving the best constant $C_{N,\gamma}$ in inequality \eqref{RH_cf}, it is enough to consider the case where the curl-free field ${\bm u}=\nabla \phi$ is compactly supported on $\dot{\mathbb{R}}^N$. 
Here let us verify this fact. First of all let us define $\{{\bm u}_n\}\subset C_c^\infty(\dot{\mathbb{R}}^N)^N$ as a sequence of curl-free fields by 
\[
	 \bu_n(\bx):= \nabla \({\zeta}\(\frac{1}{n}\log|{\bm x}|\)\phi(\bx) \) \quad\text{for every }\ n\in\mathbb{N},
\]
where ${\zeta}\in C^\infty(\mathbb{R})$ such that 
${\zeta}({t})= \left\{\begin{array}{ll} 0&\text{ for }\ {t}\le-1\vspace{0.25em}\\	 
1&\text{ for }\ 1\le {t}\end{array}\right.$.
We use the abbreviations such as $\zeta_n=\zeta\(\frac{1}{n}\log|{\bm x}|\)$, $\zeta_n'=\zeta'\(\frac{1}{n}\log|{\bm x}|\)$, and $\zeta_n''=\zeta''\(\frac{1}{n}\log|{\bm x}|\)$. 
Noticing the asymptotic formulae
\begin{align*}
	\partial\zeta_n=\frac{1}{n}\zeta_n'=O(n^{-1}),\quad \nabla\zeta_n=\frac{{\bm \sigma}}{n{r}}\zeta_n'=\frac{{\bm \sigma}}{{r}}O(n^{-1}),
\quad \nabla\zeta_n'= \frac{{\bm \sigma}}{n{r}}\zeta_n''= \frac{{\bm \sigma}}{{r}}O(n^{-1})
\end{align*}
as $n\to\infty$,
we have the following calculations:
\begin{align*}
 \bu_n &=\nabla\(\zeta_n\phi\)=(\nabla\zeta_n)\phi+\zeta_n\nabla\phi
 = \frac{{\bm \sigma}\phi}{{r}} O(1/n)+ \zeta_n \bu   
, \\
 \nabla \bu_n &=
 \nabla\(\frac{{\bm \sigma}}{n\hspace{0.1em}{r}}{\zeta}'_n \phi+{\zeta}_n {\bm u}\)
 =\frac{{\bm \sigma}}{n{r}}(\nabla\zeta_n')\phi+\frac{\zeta_n'}{n}\nabla\frac{{\bm \sigma}\phi}{{r}}+(\nabla\zeta_n){\bm u}+\zeta_n\nabla {\bm u}
 \\&=\frac{{\bm \sigma}{\bm \sigma}\phi}{{r}^2} O(n^{-2})+\frac{\zeta_n'}{n}\(\frac{{\bm \sigma}}{{r}}\nabla \phi+\(\nabla \frac{{\bm \sigma}}{{r}}\)\phi\)+\(\frac{{\bm \sigma}}{{r}}{\bm u}\) O(n^{-1})+\zeta_n\nabla {\bm u}
 \\&=\frac{{\bm \sigma}{\bm \sigma}\phi}{{r}^2} O(n^{-1})+{\bm \sigma}\frac{{\bm u}}{{r}} O(n^{-1})+\zeta_n\nabla {\bm u}
\\ 
&\(
\begin{array}{l}
\text{where we abbreviate as }
{\bm v}{\bm w}:={\bm v}\otimes {\bm w}=\(v_iw_j\)_{i,j\in\{1,\cdots,N\}^2}
\\ 
\text{the tensor product of two vector fields}
\end{array}
\),
\\
\triangle {\bm u}_n &=\triangle\nabla\(\zeta_n\phi\)=\nabla\big((\triangle\zeta_n)\phi+2(\partial_{r}\zeta_n)\partial_{r}\phi+\zeta_n\triangle\phi\big)
 \\&=\nabla\(\(\partial^2\zeta_n+(N-2)\partial\zeta_n\){r}^{-2}\phi+\frac{2\partial\zeta_n}{{r}} \partial_{r}\phi\)+\nabla\(\zeta_n\triangle\phi\)
\\&=\nabla\(\(n^{-2}\zeta_n''+(N-2)n^{-1}\zeta_n'\)\frac{\phi}{{r}^2}+\frac{2\zeta_n'}{n} \frac{\partial_{r}\phi}{{r}}\)+\nabla\(\zeta_n\triangle\phi\)
 \\&=\Big(\underbrace{n^{-2}\nabla\zeta_n''+(N-2)n^{-1}\nabla\zeta_n'\strut}_{\frac{{\bm \sigma}}{{r}}O(n^{-1})}\Big)\frac{\phi}{{r}^2}
+\Big(\underbrace{n^{-2}\zeta_n''+(N-2)n^{-1}\zeta_n'\strut}_{O(n^{-1})}\Big)\nabla\frac{\phi}{{r}^2}
 \\&\quad+\frac{2\nabla\zeta_n'}{n}\frac{\partial_{r}\phi}{{r}}+\frac{2\zeta_n'}{n}\nabla \frac{\partial_{r}\phi}{{r}}+(\nabla\zeta_n)\triangle\phi+\zeta_n\triangle\nabla\phi
 \\&=\frac{{\bm \sigma}}{{r}} O(n^{-1})\frac{\phi}{{r}^2}+O(n^{-1})\nabla \frac{\phi}{{r}^2}+\frac{{\bm \sigma}\partial_{r}\phi}{{r}^2}O(n^{-2})+O(n^{-1})\nabla \frac{\partial_{r}\phi}{{r}}
 \\&\quad 
+{\bm \sigma}O(n^{-1})\frac{\triangle\phi}{{r}}+\zeta_n\triangle\nabla\phi
 \\&=\frac{{\bm \sigma}\phi}{{r}^3}O(n^{-1})+\frac{{\bm u}}{{r}^2}O(n^{-1})+\frac{{\bm \sigma}u_R}{{r}^2}O(n^{-1})
 +\frac{\partial_{r}{\bm u}}{{r}}O(n^{-1})
\\&\quad +\frac{{\bm \sigma}{\rm div}\hspace{0.1em}{\bm u}}{{r}} O(n^{-1})+\zeta_n\triangle {\bm u}
\end{align*}
hold as $n\to\infty$. 
Therefore, taking the $L^2(|{\bm x}|^{2\gamma}dx)$ integration yields
\begin{align*}
 &\int_{\re^N}\frac{|{\bm u}_n|^2}{|{\bm x}|^4}|{\bm x}|^{2\gamma}dx=\int_{\re^N}\frac{|\zeta_n{\bm u}|^2}{|{\bm x}|^4}|{\bm x}|^{2\gamma}dx+O(n^{-1})\to\int_{\mathbb{R}^N}\frac{|{\bm u}|^2}{|{\bm x}|^4}|{\bm x}|^{2\gamma}dx, \\
 &\int_{\re^N}\frac{|\nabla {\bm u}_n|^2}{|{\bm x}|^2}|{\bm x}|^{2\gamma}dx= \int_{\re^N}\frac{|\zeta_n\nabla {\bm u}|^2}{|{\bm x}|^2}|{\bm x}|^{2\gamma}dx+O(n^{-1})\to\int_{\mathbb{R}^N}\frac{|\nabla {\bm u}|^2}{|{\bm x}|^2}|{\bm x}|^{2\gamma}dx,
\\
 &\int_{\mathbb{R}^N}|\triangle {\bm u}_n|^2|{\bm x}|^{2\gamma}dx=\int_{\mathbb{R}^N}|\zeta_n\triangle {\bm u}|^2|{\bm x}|^{2\gamma}dx+O(n^{-1})\to\int_{\mathbb{R}^N}|\triangle {\bm u}|^2|{\bm x}|^{2\gamma}dx
\end{align*}
with the aid of the integrability conditions \eqref{finite}. This fact shows that the two integrals in \eqref{RH_cf} can be approximated by curl-free fields with compact support on $\dot{\mathbb{R}}^N$, as desired.

\subsection{Radial- and spherical-scalar expression of the integrals} 
\label{subsec:integrals}

In the rest of the present section, we use the notation \[{t}=\log|{\bm x}|=\log{r}\]
for an alternative radial coordinate obeying the differential rules
 \begin{equation}
\label{Emden} \partial_t ={r}\partial_{r}=\partial={\bm x}\cdot\nabla \quad\text{ and }\quad d{t}=d{r}/{r},
\end{equation}
which reproduces the same notation $\partial$ given in \eqref{der}. 
For any parameter $\lambda\in\mathbb{R}$, let $f$ and $\varphi$ be the scalar fields determined by the curl-free field ${\bm u}$, as given in Proposition \ref{prop:CF}, and we set 
\begin{equation}
\label{BV} 
	{\bm v}({\bm x})=|{\bm x}|^{1-\lambda}{\bm u}({\bm x})
\end{equation} 
as a new vector field in $C_c^\infty(\dot{\mathbb{R}}^N)^N$. 
Then the equation
\begin{equation}
\label{eq:v}
	 {\bm v} 
	={\bm \sigma} \big(f+(\lambda+\pd)\varphi\big) + \nabla_{\!\sigma} \varphi
\end{equation}
holds on $\dot{\mathbb{R}}^N$. Here we keep in mind that the equations \eqref{BV} and \eqref{eq:v} are invariant under the following replacement of the quadruple:
\begin{equation}
 \label{replace} (f,\varphi,{\bm v},{\bm u})\longmapsto\(\partial f,\partial\varphi,\partial {\bm v}, {r}^{\lambda-1}\partial({r}^{1-\lambda}{\bm u})\).
\end{equation}
Now, we choose 
\begin{equation}
\label{eta}
\lambda=2-\frac{N}{2}-\gamma,
\end{equation}
and  let us recall from \cite[\S3.2]{CF_Re} that the integral on the right-hand side of the Hardy-Leray inequality \eqref{HL} can be expressed in terms of $({\bm v},f,\varphi)$ as follows:
\begin{align*}
\tl{L2_Du}
 \mathop{\int}_{\re^N}|\nabla {\bm u}|^2|{\bm x}|^{2\gamma}dx  
	&=\mathop{\iint}_{\re\times\Sp^{N-1}}\((\lambda-1)^2|{\bm v}|^2+|\pd{\bm v}|^2+|\nabla_{\!\sigma} {\bm v}|^2\)dt\hspace{0.1em}\mathrm{d}\sigma,
\\ 
\mathop{\iint}_{\re\times\Sp^{N-1}}|\nabla_{\!\sigma} {\bm v}|^2dt\hspace{0.1em}\mathrm{d}\sigma
&=\mathop{\iint}_{\re \times \Sp^{N-1}}\Big( (\triangle_\sigma\varphi)^2 + \((\lambda-2)^2-2N\) |\nabla_{\!\sigma} \varphi|^2 \Big) dt \hspace{0.1em}\mathrm{d}\sigma  
\\  &\quad
+ \iint_{\re \times \Sp^{N-1}}\( |\pd \nabla_{\!\sigma} \varphi|^2 + (N-1)|\bv|^2  \) dt \hspace{0.1em}\mathrm{d}\sigma,\tl{L2_Ds_v} 
\\
\mathop{\iint}_{\re \times \Sp^{N-1}}|{\bm v}|^2 dt \hspace{0.1em}\mathrm{d}\sigma
&=\iint_{\re \times \Sp^{N-1}}\(f^2+(\pd \varphi)^2+\lambda^2\varphi^2+|\nabla_{\!\sigma}\varphi|^2\)dt \hspace{0.1em}\mathrm{d}\sigma
\\&=\iint_{\mathbb{R}\times\mathbb{S}^{N-1}}\(f^2+\varphi\(\lambda^2-\partial^2-\triangle_\sigma\)\varphi\)d{t}\mathrm{d}\sigma. \tl{L2_v} 
\end{align*}
Here the last equality follows from integration by parts together with the support compactness of ${\bm v}$ or $f,\varphi$.  
Applying \eqref{L2_v} to \eqref{replace}, we also obtain
\begin{align*}
 \mathop{\iint}_{\mathbb{R}\times\mathbb{S}^{N-1}}|\partial{\bm v}|^2d{t}\hspace{0.1em}\mathrm{d}\sigma&=\mathop{\iint}_{\re \times \Sp^{N-1}}\Big((\partial f)^2+(\partial^2\varphi)^2+\lambda^2(\partial\varphi)^2+|\partial\nabla_{\!\sigma}\varphi|^2\Big)dt \hspace{0.1em}\mathrm{d}\sigma
\\&=\mathop{\iint}_{\re \times \Sp^{N-1}}\Big(f(-\partial^2)f+\varphi\(\lambda^2-\partial^2-\triangle_\sigma\)(-\partial^2\varphi)\Big)dt \hspace{0.1em}\mathrm{d}\sigma
\tl{L2_dv}
\end{align*}
by integration by parts. 
After plugging \eqref{L2_Ds_v} into \eqref{L2_Du}, substitute \eqref{L2_v} and \eqref{L2_dv} into the $L^2$ terms of ${\bm v}$ and $\partial {\bm v}$; then we get 
\begin{align*}
 \mathop{\int}_{\mathbb{R}^N}|\nabla {\bm u}|^2|{\bm x}|^{2\gamma}dx&=\((\lambda-1)^2+N-1\)\mathop{\iint}_{\mathbb{R}\times\mathbb{S}^{N-1}}|{\bm v}|^2d{t}\mathrm{d}\sigma+\mathop{\iint}_{\mathbb{R}\times\mathbb{S}^{N-1}}|\partial {\bm v}|^2d{t}\mathrm{d}\sigma
 \\ &\quad+\mathop{\iint}_{\mathbb{R}\times\mathbb{S}^{N-1}}\Big((\triangle_\sigma\varphi)^2 + \((\lambda-2)^2-2N\) |\nabla_{\!\sigma} \varphi|^2+|\partial\nabla_{\!\sigma}\varphi|^2\Big)d{t}\mathrm{d}\sigma
\\ &=\((\lambda-1)^2+N-1\)\mathop{\iint}_{\mathbb{R}\times\mathbb{S}^{N-1}}\(f^2+\varphi\(\lambda^2-\partial^2-\triangle_\sigma\)\varphi\) d{t}\mathrm{d}\sigma
\\&\quad +\iint_{\mathbb{R}\times\mathbb{S}^{N-1}}\Big(f(-\partial^2)f+\varphi\(\lambda^2-\partial^2-\triangle_\sigma\)(-\partial^2\varphi)\Big)d{t}\mathrm{d}\sigma
 \\ &\quad+\iint_{\mathbb{R}\times\mathbb{S}^{N-1}}\varphi\Big(\triangle_\sigma^2 - \((\lambda-2)^2-2N\)\triangle_\sigma +\partial^2\triangle_\sigma\Big)\varphi d{t}\mathrm{d}\sigma
\\&=\iint_{\mathbb{R}\times\mathbb{S}^{N-1}}\(\varphi \mathcal{P}_1(-\partial^2,-\triangle_\sigma,\lambda)\varphi+f\mathcal{P}_0(-\partial^2,\lambda)f\)d{t}\mathrm{d}\sigma
\end{align*}
by integration by parts, where we have defined two polynomials $\mathcal{P}_1$ and $\mathcal{P}_0$ by 
\begin{equation*}
\left\{\begin{split}
	 \mathcal{P}_1(\tau, {a},\lambda)&=\((\lambda-1)^2+N-1\)\(\lambda^2+\tau+{a}\)
	 \\&\quad +\(\lambda^2+\tau+a\)\tau+a^2+\((\lambda-2)^2-2N\){a}+{a}\tau
	 \\&={a}^2+\(2\lambda^2-6\lambda+4-N+2\tau\){a}
	 \\&\quad +\(\lambda^2+\tau\)\(\(\lambda-1\)^2+N-1+\tau\),
	 \\ \mathcal{P}_0(\tau,\lambda)&=(\lambda-1)^2+N-1+\tau.
	\end{split}
 \right.
\end{equation*}
Now let us replace $\gamma$ by $\gamma-1$; in view of \eqref{eta}, this manipulation is equivalent to replacing $\lambda$ by $\lambda+1$. Then the result of the above integral computation changes into
\begin{equation}
\label{poly:P}
\left.
\begin{aligned}
 \mathop{\int}_{\mathbb{R}^N}\frac{|\nabla {\bm u}|^2}{|{\bm x}|^2}|{\bm x}|^{2\gamma}dx
&=\mathop{\iint}_{\mathbb{R}\times\mathbb{S}^{N-1}}\(\varphi P_1(-\partial^2,-\triangle_\sigma)\varphi+fP_0(-\partial^2)f\)d{t}\mathrm{d}\sigma,\\
\text{where and hereafter}&\text{ we abbreviate as}
\\
P_1(\tau,{a}):\hspace{-0.25em}&=\mathcal{P}_1(\tau,{a},\lambda+1)
\\&={a}^2+\Big(2\(\lambda^2-\lambda+\tau\)-N\Big){a}
\\&\quad  +\((\lambda+1)^2+\tau\)\(\lambda^2+N-1+\tau\),
\\ P_0(\tau):\hspace{-0.25em}&=\mathcal{P}_0(\tau,\lambda+1)=\lambda^2+N-1+\tau,
\end{aligned}
\right\}
\end{equation}
as the expression in terms of $f,\varphi$ for the integral on the right-hand side of the Rellich-Hardy inequality \eqref{RH_cf}. 
To express the left-hand side,  
we exploit the result of \cite[Eq.(30),(31) with $\lambda$ replaced by $\lambda+1$]{CF_Re}: it holds that
\begin{equation}
\hspace{-0.75em}
\label{poly:Q}
\left.\begin{aligned}
 \mathop{\int}_{\re^N}|\triangle \bu|^2|\bx|^{2\gamma}&dx
=\mathop{\iint}_{\re\times\Sp^{N-1}}\Big(\varphi\,Q_1(-\pd^2,-\triangle_\sigma)\varphi + f Q_0(-\pd^2)f\Big) dt\hspace{0.1em}{\mathrm d} \sigma, 
\\ 
\text{where $Q_1$ and}& \text{ $Q_0$ are the polynomials given by}\\
Q_1(\tau,{a}) 
	& 
	= \scalebox{0.89}[1]{$\(\tau+{a}+(\lambda-1)^2\) 
	\(\begin{array}{l}
	    	 \(\tau+{a}+(\lambda+1)^2\)\(\tau+{a}+(\lambda+N-1)^2\)
	   \vspace{0.5em}
	  \\ -(2\lambda+N)^2{a}
	  \end{array}   
\)$}	
\\&=\scalebox{0.85}[1]{$\big(\tau+{a}+(\lambda-1)^2\big)\Big(\tau^2+\(2\({a}+\alpha_{\lambda+1}\)+(N-2)^2\)\tau+\({a}-\alpha_{\lambda+1}\)^2\Big)$}, 
 \\
 Q_0(\tau) &=\(\tau+(\lambda-1)^2\)\(\tau+(\lambda+N-1)^2\). 
\end{aligned}
\right\}\hspace{-0.75em}
\end{equation}
To proceed further, let us apply to $\varphi$ and $f$ the one-dimensional Fourier transformation with respect to $t$: 
we set
\[
\widehat{\varphi}(\tau,{\bm \sigma})= \frac{1}{\sqrt{2\pi}}\int_{\re}e^{-i\tau t}\varphi(e^{t}{\bm \sigma})dt,\quad\
	\widehat{f}(\tau)= \frac{1}{\sqrt{2\pi}}\int_{\re}e^{-i\tau t}f(e^t {\bm \sigma})dt
\]
for $(\tau,{\bm \sigma})\in\re\times\Sp^{N-1}$, where $i=\sqrt{-1}$.  
Also we  apply to $\widehat{\varphi}$ the spherical harmonics decomposition:
\[
\widehat{\varphi}
=\sum_{\nu\in\mathbb{N}}\widehat{\varphi}_\nu
,
	\qquad
 \left\{\begin{array}{l}
	-\triangle_\sigma \widehat{\varphi}_\nu=\alpha_\nu \widehat{\varphi}_\nu,\vspace{0.5em} \\ \alpha_\nu=\nu(\nu+N-2)\quad\forall\nu\in\mathbb{N}.
\end{array}\right.
\]
Now, we are in a position to evaluate the quantity 
\begin{equation}
\label{R-H quotient}
	\frac{\int_{\mathbb{R}^N}|\triangle {\bm u}|^2|{\bm x}|^{2\gamma}dx}{\int_{\mathbb{R}^N}|\nabla {\bm u}|^2|{\bm x}|^{2\gamma-2}dx},
\end{equation}
which we simply call the {\it R-H quotient}.
To this end, by using \eqref{poly:P}, \eqref{poly:Q} and the $L^2(\re)$ isometry of the Fourier transformation, we have
\begin{align*}
 \frac{\int_{\re^N}|\triangle {\bm u}|^2|{\bm x}|^{2\gamma}dx}{\int_{\re^N}|\nabla{\bm u}|^2|{\bm x}|^{2\gamma-2}dx}
 &=\frac{\disp \iint_{\re\times\mathbb{S}^{N-1}}\(\sum_{\nu \in \N} Q_1(\tau^2,\alpha_\nu)|\widehat{\varphi_\nu}|^2 + Q_0(\tau^2)|\widehat{f}|^2\)d\tau\hspace{0.1em}\mathrm{d}\sigma}{\disp \iint_{\re\times\mathbb{S}^{N-1}}\(\sum_{\nu \in \N} P_1(\tau^2,\alpha_\nu)|\widehat{\varphi_\nu}|^2 + P_0(\tau^2)|\widehat{f}|^2\)d\tau\hspace{0.1em}\mathrm{d}\sigma}
\\
 &\ge \min\left\{\inf_{\tau\in\re\backslash\{0\}}\frac{Q_0(\tau^2)}{P_0(\tau^2)},\ \inf_{\nu\in\N}\inf_{\tau\in\re\backslash\{0\}}\frac{Q_1(\tau^2,\alpha_\nu)}{P_1(\tau^2,\alpha_\nu)}\right\}  
\\&= \min\left\{\inf_{\tau>0}\frac{Q_0(\tau)}{P_0(\tau)},\ \inf_{\nu\in\N}\inf_{\tau>0}\frac{Q_1(\tau,\alpha_\nu)}{P_1(\tau,\alpha_\nu)}\right\}.  
\tl{quotient:RH}
\end{align*}
Hence, our goal is reduced to evaluate the fractions $Q_0/P_0$ and $Q_1/P_1$.
In the following subsections, we will show that the infimum values of these fractions are achieved at $\tau=0$.

\subsection{Evaluation of $Q_0/P_0$}
\label{subsec:Q0/P0}

A direct calculation yields
\[
 \begin{aligned}
  \frac{Q_0(\tau)}{P_0(\tau)}&=\frac{\(\tau+(\lambda-1)^2\)\(\tau+(\lambda+N-1)^2\)}{\lambda^2+N-1+\tau}
  \\&=\tau+\(\lambda+N-2\)^2+(N-1)\(1-\frac{\(2\lambda+N-2\)^2}{\tau+\lambda^2+N-1}\)
 \end{aligned}
\]
for all $\tau\ge0$.  
The last expression is of the form $g(\tau) = \tau + a - \frac{b}{\tau + c}$ for some constants $a,b\ge0$ and $c > 0$, which leads to $g(\tau)-g(0)=\tau+ \frac{b\tau}{c(\tau+c)}\ge\tau$. 
Thus we have 
\begin{equation}
	\label{est:Q0/P0}  \frac{1}{\tau}\(\frac{Q_0(\tau)}{P_0(\tau)}-\frac{Q_0(0)}{P_0(0)}\)\ge 1\qquad\forall\tau>0,
\end{equation}
whence in particular we obtain
$\quad \displaystyle\inf_{\tau>0}\frac{Q_0(\tau)}{P_0(\tau)} =\frac{Q_0(0)}{P_0(0)}=\frac{(\lambda-1)^2\(\lambda+N-1\)^2}{\lambda^2+N-1}$.

\subsection{The case when $P_1$ has zeros} 
\label{subsec:P1_zero}
Here we specify when $P_1(\tau,\alpha_\nu)=0$ happens. 
Notice from \eqref{poly:P} that $P_1(\tau,{a})$ is strictly monotone increasing in $\tau\ge0$ for any ${a}>0$, and hence it holds that
\[
\begin{split}
  P_1(\tau,{a})
&>P_1(0,{a})
 \\&={a}^2+\(2(\lambda-1/2)^2- \tfrac{1}{2}-N\){a}+(\lambda+1)^2\(\lambda^2+N-1\)
\end{split}
\]
for all $\tau>0$, as well as that 
\[
 P_1(\tau,\alpha_1)>P_1(0,\alpha_1)
=P_1(0,N-1)
=\lambda^2\((\lambda+1)^2+3(N-1)\)
\]
for all $\tau>0$. Notice on the right-hand side of the (three lines) above inequality that the center of the graph of the quadratic function ${a}\mapsto P_1(0,{a})$ is located at ${a}=-\(\lambda-\frac{1}{2}\)^2+\frac{1}{4}+\frac{1}{2}N\le 2N=\alpha_2$. Then we see that for all  $\tau>0$ and $\nu\ge2$, 
\[
\begin{split}
   P_1(\tau,\alpha_\nu)&>P_1(0,\alpha_\nu)
\\&\ge P_1(0,\alpha_2)
\\&=\lambda^4+2 \lambda^3+5N \lambda^2-2(N+1)\lambda+(N+1)(2N-1)
\\&=\lambda^2(\lambda+1)^2+2(2N-1)\lambda^2+(N+1)(\lambda-1)^2+2(N^2-1)
\\&\ge2(N^2-1)>0.
\end{split}
\]
In view of the above discussion, we see that 
\[
  P_1(\tau,\alpha_\nu)=0 \quad \text{ holds if and only if }\ (\tau,\nu,\lambda)=(0,1,0)
\] 
Hence, every time we treat the rational polynomial $Q_1/P_1$,   we have to deal with the case $\lambda=0$ \big(or equivalently $\gamma=2-\frac{N}{2}$\big) as a special one. 
For this reason, in the rest of this paper  we always assume $\lambda\ne0$ \big($\Leftrightarrow\gamma\ne2-\frac{N}{2}$\big) unless others are specified.

\subsection{Evaluation of $Q_1/P_1$}
\label{subsec:Q1/P1}

Let us check that
\[
\inf_{\tau>0} \frac{Q_1(\tau,{a})}{P_1(\tau,{a})}=\frac{Q_1(0,{a})}{P_1(0,{a})}
\qquad \forall{a}\in\{\alpha_\nu\}_{\nu\in\mathbb{N}}
\]
in order to evaluate \eqref{quotient:RH} from below. 
This equation is equivalent to the inequality $\frac{Q_1(\tau,{a})}{P_1(\tau,{a})}\ge \frac{Q_1(0,{a})}{P_1(0,{a})}$ $(\forall\tau>0)$, 
which can be verified by directly evaluating $Q_1(\tau,{a})P_1(0,{a})-P_1(\tau,{a})Q_1(0,{a})$ to be nonnegative. 
However, we further show the following stronger fact, which serves as a key tool for the proof of Theorem \ref{theorem:RH_c}: 

\begin{lemma}
\label{lemma:Q/P}
There exists a constant number ${c_0}>0$ such that the inequality
 \[\frac{1}{\tau}\(\frac{Q_1(\tau,{a})}{P_1(\tau,{a})}-\frac{Q_1(0,{a})}{P_1(0,{a})}\)\ge {c_0}\] 
holds for all $\tau>0$ and $a\in\{\alpha_\nu\}_{\nu\in\mathbb{N}}$. 
\end{lemma}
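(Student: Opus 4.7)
The strategy is to clear denominators. Since $P_1(\tau,a)>0$ for all $\tau\ge 0$ and $a=\alpha_\nu$ with $\nu\ge 1$ (by \S\ref{subsec:P1_zero}, under the standing assumption $\lambda\ne 0$), the inequality of Lemma~\ref{lemma:Q/P} is equivalent to the polynomial statement $N(\tau,a):=Q_1(\tau,a)P_1(0,a)-Q_1(0,a)P_1(\tau,a) \ge c_0\,\tau\,P_1(\tau,a)P_1(0,a)$. Since $N(0,a)=0$, we factor out $\tau$ to obtain $N(\tau,a)=\tau\,\tilde N(\tau,a)$ with $\tilde N$ of degree $2$ in $\tau$; the task reduces to showing $\tilde N(\tau,a)\ge c_0\,P_1(\tau,a)P_1(0,a)$ for all $\tau\ge 0$ and $a\in\{\alpha_\nu\}_{\nu\in\N}$.

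The concrete plan is as follows. Using \eqref{poly:P} and \eqref{poly:Q}, I would expand and write $\tilde N(\tau,a)=A_2(a)\tau^2+A_1(a)\tau+A_0(a)$, where each $A_j$ is a polynomial in $a,\lambda,N$. Setting $D(\tau,a):=\tilde N(\tau,a)-c_0\,P_1(\tau,a)P_1(0,a)$, the goal is to verify that $D$, viewed as a quadratic polynomial in $\tau$, is nonnegative on $\tau\ge 0$ for every admissible $a$ and some fixed $c_0>0$. A preliminary asymptotic analysis shows that $\tilde N/[P_1(\tau,a)P_1(0,a)]\to 1$ both as $\tau\to\infty$ (by matching leading coefficients, since $Q_1$ is cubic in $\tau$ with unit leading coefficient and $P_1$ is quadratic) and as $a\to\infty$, so $c_0\le 1$ is forced, matching the values $c\ge 1,\,1/2,\,1/3$ in the remark after Theorem~\ref{theorem:RH_c}. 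For uniformity across $\nu\in\N$, one performs a leading-order expansion in $\alpha_\nu$ for large $\nu$ to handle that regime, and verifies $D\ge 0$ directly as a one-variable polynomial in $\tau\ge 0$ for the finitely many small values of $\nu$ --- either by sign-checking the coefficients of $D$ in $\tau$ or by completing the square. A convenient alternative is to show that the discriminant $A_1^2-4A_2A_0$ of the adjusted quadratic is nonpositive.

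The main obstacle is the algebraic bulk: $\tilde N$ is a polynomial of total degree $5$ in $(\tau,a,\lambda,N)$, and signing the quadratic-in-$\tau$ polynomial $D$ demands careful grouping of terms. The authors split the argument into two cases --- one proved in this section and one deferred to Section~\ref{sec:Q/P} --- and the split presumably tracks the sign of an expression such as $\gamma-1$ appearing in $C_{N,\gamma,\nu}$ of Theorem~\ref{theorem:RH}, which governs whether a key coefficient of $D$ is manifestly nonnegative or must be dominated by another term. The tightest constraint on $c_0$ is expected when $\lambda$ is close to $0$ and $a=\alpha_1=N-1$, since there $P_1(0,\alpha_1)=\lambda^2((\lambda+1)^2+3(N-1))$ becomes smallest, consistent with the $c\ge 1/3$ bound reported for $N=2$ and $\gamma>1$.
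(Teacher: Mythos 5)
Your strategy --- clear denominators, factor out $\tau$, reduce to nonnegativity of a polynomial quadratic in $\tau$, and split into cases according to the sign of $\gamma-1$ (equivalently of $2\lambda+N-2=-2(\gamma-1)$, which is precisely the common factor of the resulting numerator when $c_0=1$) --- is exactly the paper's approach, including the final coefficient-sign-checking via Taylor shifts of the $a$-variable to $a=\alpha_1$. The one suggestion in your plan that would not work as stated is the discriminant criterion $A_1^2-4A_2A_0\le 0$: the adjusted quadratic in $\tau$ only needs to be nonnegative on $\tau\ge 0$, not on all of $\mathbb{R}$, so that criterion is strictly stronger than required and can fail where the coefficient-positivity test (which the paper actually uses, proving each of $E_0,E_1$ or $F_0,F_1$ nonnegative on $\{\alpha_\nu\}_{\nu\ge 1}$) succeeds.
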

Here we  give the proof of the lemma only for the case $\gamma\le1$. Since the proof for $\gamma>1$ follows by a similar technique, we postpone it in later section (see \S \ref{sec:Q/P}). 
The \tcr{proof of Lemma \ref{lemma:Q/P}} consists of tedious computations, and we used {\it Maxima} \tcr{in the course of the proof.} 
\tcr{However, we need many computational techniques to simplify the calculations and ideas to make the proof understandable, even with the use of Maxima.}

\noindent
{\it Proof of Lemma \ref{lemma:Q/P} for $\gamma\le 1$ \big(or equivalently $\lambda\ge 1-\frac{N}{2}$\big).}  
It suffices to check the inequality for $c_0=1$:
\[\frac{1}{\tau}\(\frac{Q_1(\tau,{a})}{P_1(\tau,{a})}-\frac{Q_1(0,{a})}{P_1(0,{a})}\)\ge 1\qquad\forall\tau>0,\quad\forall{a}\in\left\{\alpha_\nu\right\}_{\nu\in\mathbb{N}}.\] 
To  this end,  we directly compute the left-hand side minus right-hand side:   
 by using \eqref{poly:P} and \eqref{poly:Q} we get
\begin{align*}
 \frac{1}{\tau}&\(\frac{Q_1(\tau,{a})}{P_1(\tau,{a})}-\frac{Q_1(0,{a})}{P_1(0,{a})}\)-1
 \\&=\frac{1}{\tau}\(\frac{\(\tau+{a}+(\lambda-1)^2\) \Big(\tau^2+\(2\({a}+\alpha_{\lambda+1}\)+(N-2)^2\)\tau+\({a}-\alpha_{\lambda+1}\)^2\Big)
 }{{a}^2+\big(2\(\lambda^2-\lambda+\tau\)-N\big){a}
 +\((\lambda+1)^2+\tau\)\(\lambda^2+N-1+\tau\)}\right.\hspace{-1.5em}
\\&\qquad\quad\  \left.-\ \frac{\({a}+(\lambda-1)^2\) 
 \({a}-\alpha_{\lambda+1} \)^2 
 }{{a}^2+\big(2(\lambda^2-\lambda)-N\big){a}
 +(\lambda+1)^2\(\lambda^2+N-1\)}
 \)-1
 \\&=\ \underbrace{\!\!(2 \lambda+N-2)_{\vphantom{{\substack{A\\ Aa}}}}\hspace{-0.5em}\strut}_{\quad \ge\,0}\  \frac{ \ G_0({a})+G_1({a}) \tau}{P_1(0,{a})P_1(\tau,{a})},
\tl{q/p:1}  
\end{align*}
where we have defined
\begin{equation}
\label{G0G1}
 \left.\begin{split}
	 G_0({a})&:= (2\lambda+N){a}^3+\Big(\(2\lambda^2-N+5\)(2\lambda+N)-2(N-1)\Big){a}^2
	 \\&\quad\ +\(\begin{array}{l}
	 2\lambda^5+(N-8)\lambda^4-8N\lambda^3-2(N^2+2N-2)\lambda^2
\vspace{0.25em}\\ -\,2(6N-7)\lambda-2N^2-N+4
\end{array}
	 \){a}
	 \\&\quad \ +(N-1)(2\lambda+N-2)(\lambda+1)^4,
	 \\
	 G_1({a})&:=(2\lambda+N){a}^2+\Big((2\lambda+N)\((\lambda-1)^2-N+1\)-2(N-1)\Big){a}
	 \\&\quad\ +(N-1)(2\lambda+N-2)(\lambda+1)^2
	\end{split}\right\}
\end{equation}
as cubic and quadratic polynomials in ${a}$. Then the necessary and sufficient condition for the nonnegativity of \eqref{q/p:1} $(\forall\tau\ge0)$ is given by the inequalities 
\begin{equation*}
\label{G1G0}
G_1({a})\ge0\quad\text{ and }\quad G_0({a})\ge0
\qquad\forall{a}\in\{\alpha_\nu\}_{\nu\in\mathbb{N}},
\end{equation*}
whence our goal is reduced to showing them.  The first inequality is easier to prove, by considering the Taylor series of $G_1({a})$ at ${a}=\alpha_1$: a straightforward calculation yields 
\begin{align*}
G_1(\alpha_1+{s})
 & =s^2(2\lambda+N) +s\Big((N-1)(N+2\lambda-2)+(\lambda-1)^2(2\lambda+N)\Big)\\&\quad +2(N-1)\lambda^2(2\lambda+N-1)
\tl{Taylor:G1}
\end{align*}
for all ${s}\in\mathbb{R}$. 
Since $\lambda\ge1-\frac{N}{2}$, notice here that the coefficients of the powers of ${s}$ are all nonnegative, which tells us that $G_1(\alpha_1+{s})\ge0$ for all ${s}\ge0$. This fact directly implies $G_1({a})\ge0$ for all ${a}\in\{\alpha_\nu\}_{\nu\in\mathbb{N}}$, as desired.

Now, all we have to do is to show $G_0({a})\ge0$ for ${a}\in\{\alpha_\nu\}_{\nu\in\mathbb{N}}$. To do so, let us consider the Taylor series of $G_0({a})$ at ${a}=\alpha_1$, and we get
\begin{align*}
\tl{Taylor:G0} G_0(\alpha_1+{s})
 &=s^3(2\lambda+N)
 +s^2\mathcal{G}_2(\lambda)
 +s\mathcal{G}_1(\lambda)
  +2(N-1)\lambda^4(2\lambda+N-1)
\end{align*}
by a straightforward calculation, where
\begin{align*}
\tl{G2}
 \mathcal{G}_2(\lambda)&:= (2\lambda+N-2)\Big((\lambda+1)^2+\lambda^2+N+3\Big)+(N-1)^2+9,
  \\
\tl{G1} \mathcal{G}_1(\lambda)&
 :=\lambda^4(2\lambda+N-8)+2\lambda^2\(2-4\lambda-4N+N^2\)
+N^2(2\lambda+N).
\end{align*}
Noticing that $\mathcal{G}_2(\lambda)\ge0$ and $G_0(\alpha_1)\ge0$, we aim to prove the following fact:
 \begin{alignat}{3}
\label{case1}  &  \text{if}\quad &1-\tfrac{N}{2}\le\lambda\le 1 &\quad\text{ then}\quad \mathcal{G}_1(\lambda)\ge0,  
  \\
\label{case2}  \text{or }&\text{if}\quad &1<\lambda \quad\ \ \, 
  &\quad \text{ then}\quad G_0(\alpha_2+{s})\ge0\quad\forall{s}\ge0,
 \end{alignat}
which implies the desired inequality $G_0({a})\ge0$ \ $\forall{a}\in\{\alpha_\nu\}_{\nu\in\mathbb{N}}$.

For the proof of \eqref{case1}, let $\lambda$ be parameterized as
\[\lambda=
1-\frac{N{s}}{2},\qquad 0\le{s}\le1.\]
Then we directly compute
\[
 \begin{split}
\mathcal{G}_1(\lambda)
&=\(1-\frac{N{s}}{2}\)^4\(N-N{s}-6\)+2\(1-\frac{N{s}}{2}\)^2\(2N{s}-2-4N+N^2\)
\\&\quad +N^2\(N-N{s}+2\)
  \\&=\tfrac{1}{16}N^5(1-{s}){s}^4+\tfrac{1}{8}N^4({s}-2)^2{s}^2+\tfrac{1}{2}N^3(1-{s})\(2-4{s}-5{s}^2\)\\&\quad +2N^2(2+3{s}-6{s}^2)+N(19{s}-7)-10
  \\&=\tfrac{1}{16}(N-2)^5(1-s)s^4
  +\tfrac{1}{8}(N-2)^4s^2\Big({s}^2+(1-{s})\(5{s}^2+4\)\Big)
  \\&\quad +\tfrac{1}{2}(N-2)^3\Big((1-{s}){s}^2\((1-{s})^2+4{s}^2\)+2\(1-3{s}+3{s}^2\)\Big)
\\&\quad 
+(N-2)^2\Big((1-{s})^2\(4{s}+2(1-{s}^2)+5(1-{s}^3)\)+3-2{s}\Big)
\\&\quad +(N-2)\Big((1-s)^2s(17-s-5s^2)+21-10s-3s^2\Big)
 \\&\quad +2s\Big((1-s)(3+s)\(5-4s+s^2\)+4\Big)
 \end{split}
\]
as a Taylor series of the function $N\mapsto \mathcal{G}_1(\lambda)=\mathcal{G}_1\(1-\frac{N{s}}{2}\)$ at $N=2$. Notice here that the coefficients of the powers of $N-2$ are all nonnegative since $0\le {s}\le1$. Therefore, we get $\mathcal{G}_1(\lambda)\ge0$, as desired.

Now, all that is left is to show \eqref{case2}. To this end, notice from \eqref{Taylor:G0} that
\[
\begin{split}
 \frac{1}{{s}}\,  G_0(\alpha_1+{s})
&\ge {s}\hspace{0.1em}\mathcal{G}_2(\lambda)+\mathcal{G}_1(\lambda)
 \\&\ge {s} \hspace{0.1em}(2\lambda+N-2)\Big((\lambda+1)^2+\lambda^2\Big)
 \\&\quad  +	      \lambda^4(2\lambda+N-8)+2\lambda^2\(2-4\lambda-4N+N^2\)
\end{split}
\]
holds for all ${s}>0$. 
Replacing ${s}$ by $N+1+{s}$ on both sides, we then get
\[
 \begin{split}
  \frac{G_0(\alpha_2+{s})}{N+1+{s}}&=\frac{1}{N+1+{s}}\hspace{0.1em}G_0(\alpha_1+N+1+{s})
  \\&\ge (N+1+{s})(2\lambda+N-2)\((\lambda+1)^2+\lambda^2\)
  \\&\quad +\lambda^4(2\lambda+N-8)+2\lambda^2\(2-4\lambda-4N+N^2\)
\\&=2(\lambda-1)^5+(\lambda-1)^4(N+2)+4(\lambda-1)^3(s+2N-4)
\\&\quad +2(\lambda-1)^2\((N+6)s+2N^2+6N-18\)
\\&\quad +2(\lambda-1)\((3N+5)s+5N^2+2N-14\)
\\&\quad +5Ns+7N^2-2N-10
 \end{split}
\]
for all ${s}\ge0$. Notice here that the coefficients of the powers of $\lambda-1$ are all nonnegative since $N\ge2$. Therefore, from the assumption $\lambda>1$ we have obtained $G_0(\alpha_2+{s})\ge0$, as desired.
\qed

Since the polynomial function $P_1(\tau,{a})$ is quadratic in $\tau$,  it is clear from \eqref{q/p:1} that
\[
\lim_{\tau\to\infty} \frac{1}{\tau}\(\frac{Q_1(\tau,{a})}{P_1(\tau,{a})}-\frac{Q_1(0,{a})}{P_1(0,{a})}\)=1
\]
for each ${a}\ge\alpha_1$. Therefore, the constant number $c_0$ of Lemma \ref{lemma:Q/P} is optimal when $c_0=1$,   in the sense that
\[
\inf_{\tau>0}\inf_{\nu\in\mathbb{N}}\frac{1}{\tau}\(\frac{Q_1(\tau,\alpha_\nu)}{P_1(\tau,\alpha_\nu)}-\frac{Q_1(0,\alpha_\nu)}{P_1(0,\alpha_\nu)}\)=1
\]
holds as far as $\gamma\le1$. 

\subsection{A lower bound for the R-H quotient
}
\label{subsec:lower}

In view of the estimate \eqref{quotient:RH} for the R-H quotient \eqref{R-H quotient}, it follows from \S\ref{subsec:Q0/P0}, \S\ref{subsec:P1_zero} and \S\ref{subsec:Q1/P1} that the inequality
\[
\int_{\mathbb{R}^N}\left|\triangle {\bm u}\right|^2|{\bm x}|^{2\gamma}dx\ge C_{N,\gamma}\int_{\mathbb{R}^N}|\nabla{\bm u}|^2|{\bm x}|^{2\gamma-2}dx
\] 
holds for curl-free fields ${\bm u}$ with the constant number
 \[\begin{split}
  C_{N,\gamma}&=\min\left\{\inf_{\nu\in\N}\inf_{\tau>0}\frac{Q_1(\tau,\alpha_\nu)}{P_1(\tau,\alpha_\nu)},\ \inf_{\tau>0}\frac{Q_0(\tau)}{P_0(\tau)}\right\}
  \\&=\min\left\{\min_{\nu\in\N}\frac{Q_1(0,\alpha_\nu)}{P_1(0,\alpha_\nu)},\ \frac{Q_0(0)}{P_0(0)}\right\}.
 \end{split}\]
Notice from \eqref{poly:P} and \eqref{poly:Q} that the last two fractions are explicitly written  as
\[
 \begin{split}
\frac{Q_1(0,\alpha_\nu)}{P_1(0,\alpha_\nu)}
&=\frac{
\(\alpha_\nu+(\lambda-1)^2\)(\alpha_{\lambda+1}-\alpha_\nu)^2}
{\(\alpha_{\lambda+1}-\alpha_\nu\)^2+\(2\lambda+N-2\)\big((2\lambda+1)\alpha_\nu-(N-1)(\lambda+1)^2\big)}
  \\&=\tfrac{\((\gamma-2)^2-\(\nu+\frac{N}{2}-1\)^2\)^2\(\alpha_\nu+\(\gamma+\frac{N}{2}-1\)^2\)}{\((\gamma-2)^2-\(\nu+\frac{N}{2}-1\)^2\)^2+2(\gamma-1)\(\(2\gamma+N-5\)\alpha_\nu+(N-1)\(\gamma+\frac{N}{2}-3\)^2\)},
\\ 
  \frac{Q_0(0)}{P_0(0)}&=\frac{(\lambda-1)^2(\lambda+N-1)^2}{\lambda^2+N-1}=\frac{\((\gamma-1)^2-\frac{1}{4}N^2\)^2}{\(\gamma+\frac{N}{2}-2\)^2+N-1},
 \end{split}
\]
by recalling the notation \eqref{eta} together with the aid of the identity
\begin{equation}
\alpha_{s}-\alpha_{t}=\({s}+\tfrac{N}{2}-1\)^2-\({t}+\tfrac{N}{2}-1\)^2
\qquad\forall{s},{t}\in\mathbb{R};
\label{as-at}
\end{equation}
 in other words, we have
\[
 \begin{split}
\frac{Q_1(0,\alpha_\nu)}{P_1(0,\alpha_\nu)}=C_{N,\gamma,\nu},\qquad
\frac{Q_0(0)}{P_0(0)}=C_{N,\gamma,0}
 \end{split}
\]
in terms of the same notation in Theorem \ref{theorem:RH}.  
Therefore, we have obtained 
\[
 C_{N,\gamma}=\min_{\nu\in\mathbb{N}\cup\{0\}}C_{N,\gamma,\nu}
\]
as a lower bound for the R-H quotient \eqref{R-H quotient}, which coincides with the same constant number $C_{N,\gamma}$ given in Theorem \ref{theorem:RH}.

\subsection{Sharpness of $C_{N,\gamma}$}
\label{subsec:opt}

We show here the optimality for the constant $C_{N,\gamma}$ in the inequality \eqref{RH_cf}. To this end, we construct a sequence of curl-free fields minimizing the value of the R-H quotient \eqref{R-H quotient}.
First of all,  choose $\nu_0 \in \N\cup\{0\}$ to be such that
\[
 \min_{\nu\in\mathbb{N}\cup\{0\}}C_{N,\gamma,\nu}=C_{N,\gamma,\nu_0}.
\]
If $\gamma=2-\frac{N}{2}$, by the same computation as \eqref{C_2-n/2} below (\S \ref{sec:observation}) we have $C_{N,\gamma,1}>C_{N,\gamma,0}$ for all $N\ge2$, which implies that $\nu_0\ne1$. 
Hence it follows from \S\ref{subsec:P1_zero} that we can always assume that $P_1(0,\alpha_{\nu_0})>0$. 

Define a sequence of curl-free fields $\{\bu_n\}_{n\in\N}\subset C_c^\infty(\dot{\re}^N)^N$ by the formula
\[
 {\bm u}_n({\bm x})
:=\begin{cases}
   \  {\bm x} |{\bm x}|^{\lambda-1}h\(\frac{1}{n}\log|{\bm x}|\)&\text{if }\nu_0=0\vspace{0.25em}
\\\ \nabla\(|{\bm x}|^{\lambda+1}\varphi_n({\bm x})\)&\text{otherwise}
\end{cases}
\]
together with
\[
 \varphi_n({\bm x})=h\(\tfrac{1}{n}\log|{\bm x}|\)Y({\bm x}/|{\bm x}|)
\]
for any $h\in C_c^\infty(\mathbb{R})\backslash\{0\}$ such that $\int_{\mathbb{R}}(h({t}))^2d{t}=1$. Here  $Y\in C^\infty(\mathbb{S}^{N-1})\setminus\{0\}$ denotes a spherical harmonic function satisfying the eigenequation \[-\triangle_\sigma Y=\alpha_{\nu_0}Y\qquad\text{on }\mathbb{S}^{N-1}.\] 
Also define $\{{\bm v}_n\}\subset C_c^\infty(\dot{\mathbb{R}}^N)^N$ as a sequence of vector fields by the formula  \[\bu_n({\bm x})=|{\bm x}|^\lambda{\bm v}_n({\bm x})\]
in the same way as \eqref{BV}. 
Then we have
\[
	\bv_n=\left\{
	\begin{array}{ll}
	{\bm \sigma}f_n & \text{ if }\ \nu_0=0
	\vspace{0.5em}\\ 
	{\bm \sigma}(\pd +\lambda+1)\varphi_n+\nabla_{\!\sigma}\varphi_n &\text{ otherwise}
	\end{array}\right.,
\]
where $f_n$ is given by $f_n({\bm x})=h(\frac{1}{n}\log|{\bm x}|)$. 
In this setting, let us now apply the formulae \eqref{poly:P} and \eqref{poly:Q} to the case $({\bm u},f,\varphi)=({\bm u}_n,f_n,0)$ or $({\bm u},f,\varphi)=({\bm u}_n,0,\varphi_n)$. Then we have
\begin{align*}
\qquad  \frac{\int_{\re^N}|\triangle {\bm u}_n|^2|{\bm x}|^{2\gamma}dx}{\int_{\re^N}|\nabla{\bm u}_n|^2|{\bm x}|^{2\gamma-2}dx}
 &=\left\{
 \begin{aligned}
  &\frac{\int_{\re} h(\frac{{t}}{n})\hspace{0.1em}Q_0(-\pd_{t}^2)h(\frac{t}{n})dt}{\int_{\re} h(\frac{{t}}{n})\hspace{0.1em}P_0(-\pd_{t}^2)h(\frac{t}{n})dt} &\text{ if }\ \nu_0=0,\\
  &\frac{\int_{\re} h(\frac{t}{n})\hspace{0.1em}Q_1(-\pd_{t}^2,\alpha_{\nu_0})h(\frac{t}{n})\hspace{0.1em}dt}{\int_{\re} h(\frac{t}{n})\hspace{0.1em}P_1(-\pd_{t}^2,\alpha_{\nu_0})h(\frac{t}{n})\hspace{0.1em}dt} &\text{ otherwise}.
 \end{aligned}
 \right.
 \\&= 
 \left\{ 
 \begin{aligned}
  &\frac{\int_{\re} h({t})\hspace{0.1em}Q_0\(-n^{-2}\pd_{t}^2\)h({t})dt}{\int_{\re} h({t})\hspace{0.1em}P_0\(-n^{-2}\pd_{t}^2\)h({t})dt} &\text{ if }\ \nu_0=0  ,
  \\
  &\frac{\int_{\re} h({t})\hspace{0.1em}Q_1(-n^{-2}\pd_{t}^2,\alpha_{\nu_0})h\({t}\)\hspace{0.1em}dt}{\int_{\re} h({t})\hspace{0.1em}P_1(-n^{-2}\pd_{t}^2,\alpha_{\nu_0})h\({t}\)\hspace{0.1em}dt} &\text{ otherwise}.
 \end{aligned}\right.
\end{align*} 
Notice on the right-hand side that the denominator always exceeds a fixed positive number, since  $P_0(0)\ge N-1>0$ and $P_1(0,\alpha_{\nu_0})>0$ as mentioned above. 
Therefore, passing to $n\to\infty$, we get
\begin{align*}
\frac{\int_{\re^N}|\triangle {\bm u}_n|^2|{\bm x}|^{2\gamma}dx}{\int_{\re^N}|\nabla {\bm u}_n|^2|{\bm x}|^{2\gamma-2}dx}
&= \begin{cases}
	\dfrac{O(1/n^2)+Q_0(0)}{O(1/n^2)+P_0(0)}&\text{if }\ \nu_0=0
	\vspace{0.75em} \\
	\dfrac{O(1/n^2)+Q_1(0,\alpha_{\nu_0})}{O(1/n^2)+P_1(0,\alpha_{\nu_0})} &\text{otherwise}
	\end{cases} \\
	&\longrightarrow C_{N,\gamma,\nu_0}=C_{N,\gamma},
\end{align*}
which gives the desired sharpness of $C_{N,\gamma}$. 

Now, the proof of Theorem \ref{theorem:RH} has been completed.
\qed

\section{Proof of Theorem \ref{theorem:RH_c}}
\label{sec:RH_c}

Let $\nu_1$ denote the positive integer such that 
\[
 C_{N,\gamma,\nu_1}=\min_{\nu\in\mathbb{N}}C_{N,\gamma,\nu}.
\]
In order to estimate the difference between both sides of the inequality \eqref{RH_cf}, recall from the same calculation in the first line of \eqref{quotient:RH} the expression of the integrals:
\[
\begin{split}
 \int_{\mathbb{R}^N}|\triangle {\bm u}|^2|{\bm x}|^{2\gamma}dx&=\iint_{\re\times\mathbb{S}^{N-1}}\(Q_0(\tau^2)|\widehat{f}|^2+\sum_{\nu \in \N} Q_1(\tau^2,\alpha_\nu)|\widehat{\varphi_\nu}|^2  \)d\tau\hspace{0.1em}\mathrm{d}\sigma,
\\
 \int_{\mathbb{R}^N}\frac{|\nabla {\bm u}|^2}{|{\bm x}|^2}|{\bm x}|^{2\gamma}dx&= \iint_{\re\times\mathbb{S}^{N-1}}\(P_0(\tau^2)|\widehat{f}|^2+\sum_{\nu \in \N} P_1(\tau^2,\alpha_\nu)|\widehat{\varphi_\nu}|^2 \)d\tau\hspace{0.1em}\mathrm{d}\sigma.
\end{split}
\]
Then we have the following estimate:
\begin{align*}
\int_{\mathbb{R}^N}&|\triangle {\bm u}|^2|{\bm x}|^{2\gamma}dx-C_{N,\gamma}\int_{\mathbb{R}^N}\frac{|\nabla {\bm u}|^2}{|{\bm x}|^2}|{\bm x}|^{2\gamma}dx
\\
 &= \int_{\mathbb{R}^N}|\triangle {\bm u}|^2|{\bm x}|^{2\gamma}dx-\min\left\{\frac{Q_1(0,\alpha_{\nu_1})}{P_1(0,\alpha_{\nu_1})},\frac{Q_0(0)}{P_1(0)}\right\}\int_{\mathbb{R}^N}\frac{|\nabla{\bm u}|^2}{|{\bm x}|^2}|{\bm x}|^{2\gamma}dx
 \\&\ge \int_{\mathbb{R}^N}|\triangle {\bm u}|^2|{\bm x}|^{2\gamma}dx-\iint_{\mathbb{R}\times\mathbb{S}^{N-1}}\frac{Q_0(0)}{P_0(0)}P_0(\tau^2)|\widehat{f}|^2 
 \\&\quad -\iint_{\mathbb{R}\times\mathbb{S}^{N-1}}\frac{Q_1(0,\alpha_{\nu_1})}{P_1(0,\alpha_{\nu_1})}\sum_{\nu\in\mathbb{N}}P_1(\tau^2,\alpha_\nu)|\widehat{\varphi_\nu}|^2
 d\tau\mathrm{d}\sigma
 \\&= \iint_{\mathbb{R}\times\mathbb{S}^{N-1}}\left(Q_0(\tau^2)-\frac{Q_0(0)}{P_0(0)}P_0(\tau^2)\right)|\widehat{f}|^2d\tau\mathrm{d}\sigma 
 \\&\quad+
 \sum_{\nu\in\mathbb{N}}\iint_{\mathbb{R}\times\mathbb{S}^{N-1}}\left(Q_1(\tau^2,\alpha_\nu)-\frac{Q_1(0,\alpha_{\nu_1})}{P_1(0,\alpha_{\nu_1})}P_1(\tau^2,\alpha_{\nu})\right)|\widehat{\varphi_\nu}|^2d\tau\mathrm{d}\sigma
 \\&\ge \mathop{\iint}_{\re\times\mathbb{S}^{N-1}}P_0(\tau^2)\tau^2|\widehat{f}|^2d\tau\mathrm{d}\sigma
 +{c_0}\sum_{\nu\in\N} \mathop{\iint}_{\re\times\mathbb{S}^{N-1}}P_1(\tau^2,\alpha_{\nu})\tau^2|\widehat{\varphi_\nu}|^2d\tau\mathrm{d}\sigma 
 \\&\ge\min\left\{1,{c_0}\right\}\mathop{\iint}_{\re\times\mathbb{S}^{N-1}}\left(P_0(\tau^2)\tau^2|\widehat{f}|^2+\sum_{\nu\in\mathbb{N}}P_1(\tau^2,\alpha_\nu)\tau^2|\widehat{\varphi_\nu}|^2\right)d\tau\mathrm{d}\sigma 
\\&=\min\left\{1,{c_0}\right\}\mathop{\iint}_{\mathbb{R}\times\mathbb{S}^{N-1}}\Big((\partial f)P_0(-\partial^2)\partial f+(\partial\varphi) P_1(-\partial^2,-\triangle_\sigma)\partial\varphi\Big) d{t}\hspace{0.1em}\mathrm{d}\sigma
 \\&=\min\left\{1,{c_0}\right\}\int_{\mathbb{R}^N}\frac{\left|\nabla\(|{\bm x}|^{\lambda}\partial \(|{\bm x}|^{-\lambda}{\bm u}\)\)\right|^2}{|{\bm x}|^2}|{\bm x}|^{2\gamma}dx,
\end{align*}
where the last equation follows by applying the replacement \eqref{replace} to the integral equation in \eqref{poly:P}, and 
where we the fourth inequality follows by using the inequalities \eqref{est:Q0/P0} and
\[\begin{aligned}
\frac{1}{\tau^2}\left(\frac{Q_1(\tau^2,\alpha_\nu)}{P_1(\tau^2,\alpha_\nu)}-\frac{Q_1(0,\alpha_{\nu_1})}{P_1(0,\alpha_{\nu_1})}\right)
   &\ge  \frac{1}{\tau^2}\left(\frac{Q_1(\tau^2,\alpha_\nu)}{P_1(\tau^2,\alpha_\nu)}-\frac{Q_1(0,\alpha_\nu)}{P_1(0,\alpha_\nu)}\right)
\\&\ge {c_0}
\qquad\forall (\tau,\nu)\in\(\mathbb{R}\setminus\{0\}\)\times\mathbb{N},
  \end{aligned} 
\] 
as verified by using the same constant 
$c_0$ given in Lemma \ref{lemma:Q/P}. 
Finally, by restoring the notations \eqref{eta} and \eqref{Emden}, we obtain
\[
\begin{split}
 \int_{\mathbb{R}^N}|\triangle {\bm u}|^2|{\bm x}|^{2\gamma}dx&-C_{N,\gamma}\int_{\mathbb{R}^N}\frac{|\nabla {\bm u}|^2}{|{\bm x}|^2}|{\bm x}|^{2\gamma}dx\\&\ge {c}\int_{\mathbb{R}^N}\left|\nabla\(|{\bm x}|^{2-\frac{N}{2}-\gamma}({\bm x}\cdot\nabla) \big(|{\bm x}|^{\gamma+\frac{N}{2}-2}{\bm u}\big)\)\right|^2|{\bm x}|^{2\gamma-2}dx
\end{split}
\]
 for some absolute constant $c > 0$. 
 The proof of Theorem \ref{theorem:RH_c} is now complete, although the optimal value of ${c}$ is not known.
\qed

\section{An observation of the best constant $C_{N,\gamma}$ in Theorem \ref{theorem:RH}}
\label{sec:observation}

Concerning the constants in the inequalities \eqref{RH} and \eqref{RH_cf}, it holds that
\begin{equation}
\label{CA}
C_{N,\gamma}\ge A_{N,\gamma}
\end{equation} 
as a matter of course.
Here we wish to evaluate whether the strict inequality $C_{N,\gamma}>A_{N,\gamma}$ holds or not.
However, since the expression for $C_{N,\gamma}$ is complicated and its full picture seems difficult to reveal, we describe it for only some specific values of $\gamma$.  

\subsection{Preliminary: a review of $A_{N,\gamma,\nu}$}
In view of the original best constant \eqref{ANg} in Rellich-Hardy inequality, let us observe the increase or decrease of the function $\nu\mapsto A_{N,\gamma,\nu}$. 
In terms of the notation \eqref{eta},
  the expression of $A_{N,\gamma,\nu}$ in  \eqref{ANg} can be rewritten as
\begin{equation}
\label{ANg-}   A_{N,\gamma,\nu}=\left\{
		     \begin{array}{cl}
		      (\lambda+N-2)^2&\text{for }\nu=0,
		       \vspace{0.5em}
		       \\
		      \dfrac{(\alpha_\nu-\alpha_\lambda)^2}{\alpha_\nu+\lambda^2}&\text{for }\nu\in\mathbb{N}.
		     \end{array}
		    \right.
  \end{equation}
with the aid of \eqref{as-at}. 
Then a direct calculation from this expression yields
\begin{align*}
\tl{A1-A0}  &  \frac{A_{N,\gamma,1}-A_{N,\gamma,0}}{N-1} =-\frac{3\lambda^2+4(N-2)\lambda+N^2-5 N+5}{\lambda^2+N-1}
\intertext{and} 
 \quad &  \frac{A_{N,\gamma,\nu+1}-A_{N,\gamma,\nu}}{2 \nu+N-1}
 \\  &=
 \frac{\alpha_{\nu} \alpha_{\nu+1}+\lambda^2 \(2 \nu (\nu+N-1)+\big(1+\frac{1}{N-1}\lambda^2\big)\big(A_{N,\gamma,1}-A_{N,\gamma,0}\big)\) }{\(\alpha_\nu+\lambda^2\) \(\alpha_{\nu+1}+\lambda^2\)}
\tl{A-A}
\end{align*}
for all $\nu\in\mathbb{N}$. 
Notice 
that the numerator of the right-hand side is monotone increasing in $\nu\ge0$, as well as that the denominator is always positive. 
Therefore, for every $k\in\mathbb{N}\cup\{0\}$ the two inequalities
\begin{equation}
\label{A-Ak}
  A_{N,\gamma,k}\le A_{N,\gamma,k+1} \quad \text{and} \quad
  A_{N,\gamma,\nu}\le A_{N,\gamma,\nu+1} \quad (\forall\nu\ge k) 
\end{equation}
are equivalent. 
  \subsection{The case $\gamma=2-\frac{N}{2}$ (\tcr{or equivalently} $\lambda=0$)}
  Let us deal with this ``singular'' case, in the sense of \S \ref{subsec:P1_zero}. Following from the definitions of $A_{N,\gamma}$ and $C_{N,\gamma}$, 
we have
\begin{align*}
	 A_{N,2-\frac{N}{2}}&=\min\left\{A_{N,2-\frac{N}{2},0},\ \min_{\nu\in\mathbb{N}}A_{N,2-\frac{N}{2},\nu}\right\} \notag 
	\\&=\min\left\{\(2-N\)^2,\ \min_{\nu\in\mathbb{N}}\alpha_\nu\right\}=\min\Big\{(N-2)^2,\ \underbrace{N-1\strut}_{\quad =\,\alpha_1} \Big\} \notag
	 \\&=\begin{cases}
		  0&(N=2) \vspace{0.25em}\\ 
		N-1&(N\ge3) \\
	\end{cases},
	\\ \notag
	\tl{C_2-n/2} C_{N,2-\frac{N}{2}}&=\min\left\{C_{N,2-\frac{N}{2},0},\ C_{N,2-\frac{N}{2},1},\ \min_{\nu\in\mathbb{N}\setminus\{1\}}C_{N,2-\frac{N}{2},\nu}\right\} \notag
	 \\&=\min\left\{N-1,\ \tfrac{N^3}{3N-2},\ C_{N,2-\frac{N}{2},2}\right\} \notag
	 \\&=\min\left\{N-1,\ \tfrac{N^3}{3N-2},\ \tfrac{(N+1)(2N+1)}{2N-1}\right\} \notag
	 \\&=N-1. \notag
\end{align*}
Here the third equality from the last in \eqref{C_2-n/2} follows with the aid of computing
\[
C_{N,2-\frac{N}{2},\nu}=(\alpha_\nu+1)\(1-\tfrac{N-2}{\alpha_\nu-1}\)
\] 
which is monotone increasing in $\nu\ge2$. Summarizing the results above, we have obtained 
\[
\begin{split}
&  C_{2,1}=C_{2,1,0}=1>0=A_{2,1,0}=A_{2,1},
\\
 & C_{N,2-\frac{N}{2}}=C_{N,2-\frac{N}{2},0}=N-1=A_{N,2-\frac{N}{2},1}=A_{N,2-\frac{N}{2}}\quad(N\ge3).
\end{split}
\]
In particular, we see that the best constant in the two-dimensional Rellich-Hardy inequality (with $\gamma = 1$) can be really improved by the curl-free condition on the test vector fields.
  \subsection{The case $\gamma\ne 2-\frac{N}{2}$ (\tcr{or equivalently} $\lambda\ne0$ )}
Recall from \S\ref{subsec:lower} and \S\ref{subsec:P1_zero} that
\[
\begin{cases}
  C_{N,\gamma,\nu}=\dfrac{
 \(\alpha_\nu+(\lambda-1)^2\)(\alpha_{\lambda+1}-\alpha_\nu)^2}
 {P_1(0,\alpha_\nu)},
\vspace{0.5em}
\\ 
P_1(0,\alpha_\nu)=\(\alpha_{\lambda+1}-\alpha_\nu\)^2+\(2\lambda+N-2\)\big((2\lambda+1)\alpha_\nu-(N-1)(\lambda+1)^2\big)>0
\end{cases}
\]
for $\nu\in\mathbb{N}$. By using this expression together with \eqref{ANg-}, a direct computation yields
\begin{equation*}
 \begin{cases}
  \displaystyle
 \ C_{N,\gamma,\nu}-A_{N,\gamma,\nu-1}
  =-\frac{2 (\nu-\lambda-1)^2 W(\alpha_{\nu}) (\nu+N-2)}{(\alpha_{\nu-1}+\lambda^2) P_1(0,\alpha_{\nu})},
  \vspace{0.5em}
  \\
  \displaystyle
 \ C_{N,\gamma,\nu}-A_{N,\gamma,\nu+1}
  =\frac{2\nu W(\alpha_{\nu}) (1-\nu-N-\lambda)^2}{(\alpha_{\nu+1}+\lambda^2)P_1(0,\alpha_{\nu})} 
 \end{cases}
\end{equation*}
for all $\nu\in\mathbb{N}$, 
where
\[  W(\alpha_\nu)=\lambda^2 (2 \lambda+N-4) (2 \lambda+N)+(N-2)^2-\(\alpha_\nu+\lambda^2-1\)^2.
\]  
Then we have
\[\(C_{N,\gamma,\nu}-A_{N,\gamma,\nu-1}\)
 \(C_{N,\gamma,\nu}-A_{N,\gamma,\nu+1}\)
 \le 0\]
or equivalently
\begin{equation}
\label{ACA}  \min\{A_{N,\gamma,\nu-1},A_{N,\gamma,\nu+1}\}\le C_{N,\gamma,\nu}\le \max\{A_{N,\gamma,\nu-1},A_{N,\gamma,\nu+1}\}
\end{equation}
for all $\nu\in\mathbb{N}$. 
Based on this fact, let us consider the following two simplest cases:

\subsubsection*{The case $A_{N,\gamma}=A_{N,\gamma,1}$} 

It holds from \eqref{C0A1} that  \[A_{N, \gamma} = A_{N, \gamma,1} = C_{N, \gamma,0} \ge  C_{N, \gamma},\] and hence that $C_{N,\gamma}=A_{N,\gamma}$ from \eqref{CA}. This fact says that the curl-free restriction causes no effect on the improvement of the best constant. 
Now, we seek for when the equation $A_{N, \gamma} = A_{N, \gamma,1}$ will happen.
In view of the inequalities  \eqref{A-Ak} with $k=1$, this equation is equivalent to that both the inequalities
\[
A_{N,\gamma,1} \le A_{N,\gamma,0}\quad\text{ and }\quad A_{N,\gamma,1}\le A_{N,\gamma,2}
\]
hold true;  
in other words, $A_{N,\gamma}=A_{N,\gamma,1}$ holds if and only if both the numerators of the right-hand sides of \eqref{A1-A0} and \eqref{A-A}$_{\nu=1}$ evaluate to
\[
 \begin{split}
 &3\lambda^2+4(N-2)\lambda+N^2-5 N+5\ge0,\qquad\text{and}
  \\&\alpha_2 \alpha_1+\lambda^2 \(2N+\big(1+\tfrac{1}{N-1}\lambda^2\big)\(A_{N,\gamma,1}-A_{N,\gamma,0}\)\)
\\&\quad=-3 \lambda^4-4(N-2) \lambda^3- (N^2-7 N+5)\lambda^2+2N(N-1)
\\&\quad \ge0.
 \end{split}
\]
For example, if $\gamma=0$ \big(or equivalently $\lambda=2-\frac{N}{2}$\big), then the two inequalities become
\[
\begin{split}
 & 
N^2-4N-4
\le 0,\quad \text{ and}
\\& 
N^4-4N^3+12N^2+64N-64
\ge0\quad(\text{which is always true}). 
\end{split}
\]
This is the case when $N \le 4$, and hence we have
\[\begin{cases}
  C_{2,0}=C_{2,0,0}=A_{2,0,1}=A_{2,0}=0, 
\\ C_{3,0}=C_{3,0,0}=A_{3,0,1}=A_{3,0} = \frac{25}{36} ,  
 \\  C_{4,0}=C_{4,0,0}=A_{4,0,1}=A_{4,0} = 3,
\end{cases}
\]
which says that no curl-free improvement occurs when $N \in \{2, 3, 4 \}$ and $\gamma = 0$.

We may understand the above result in the following way:
we can choose a 
sequence $\{{\bm u}_n\}_{n\in\mathbb{N}}$ of vector fields 
by the formula
\[
 {\bm w}_n({\bm x})={\bm x}f_n(|{\bm x}|)\qquad \(n=1,2,\cdots\)
\]
in order that
\[
\begin{split}
 & \frac{\int_{\mathbb{R}^N}|\triangle {\bm u_n}|^2|{\bm x}|^{2\gamma}dx}{\int_{\mathbb{R}^N}|\nabla {\bm w}_n|^2|{\bm x}|^{2\gamma-2}dx}
\to A_{N,\gamma,1}
\qquad(n\to\infty),
\end{split}
\]
where $\{f_n({r})\}_{n\in\mathbb{N}}$ are a smooth functions vanishing near ${r}=0$. We may see this fact by noticing that 
 each coordinate function $x_k$ $(k=1,\cdots, N)$ satisfies the eigenequation $-\triangle_\sigma x_k =\alpha_1 x_k$. 
On the other hand, it is easy to check that $\{{\bm w}_n\}_{n\in\mathbb{N}}$ are always curl-free, whence $C_{N,\gamma}\le A_{N,\gamma}$. Therefore, we get  $C_{N,\gamma}=A_{N,\gamma}$ from $A_{N,\gamma}=A_{N,\gamma,1}$.

\subsubsection*{The case $A_{N,\gamma}=A_{N,\gamma,0}$} 

Since $A_{N,\gamma,1}\ge A_{N,\gamma,0} = A_{N, \gamma}$, we see from \eqref{A-Ak} and \eqref{ACA} that 
\[
 A_{N,\gamma,\nu-1}\le C_{N,\gamma,\nu}\le A_{N,\gamma,\nu+1}
\]
holds for all $\nu\in\mathbb{N}$. 
This fact implies
\[
	\min_{k\in\mathbb{N}\cup\{0\}}C_{N,\gamma,2k}=C_{N,\gamma,0}\quad\text{ and }\quad\min_{k\in\mathbb{N}}C_{N,\gamma,2k-1}=C_{N,\gamma,1},
\]
whence
\[
 C_{N,\gamma}=\min\left\{C_{N,\gamma,0},\ C_{N,\gamma,1}\right\}.
\]
On the other hand, a direct computation yields
\[
\begin{split}
 C_{N,\gamma,1}-C_{N,\gamma,0}
&=
 \frac{(\lambda-1)^2\(A_{N,\gamma,1}-A_{N,\gamma,0}\) +\frac{(N^2-1)}{5}\frac{(5 \lambda+2 N-4)^2+N^2+N-1}{ \lambda^2+N-1}}{(\lambda+1)^2+3(N-1)}
\\&>0
\end{split}
\]
from $A_{N,\gamma,1}\ge A_{N,\gamma,0}$. Therefore, it holds from \eqref{C0A1} that
\[
 C_{N,\gamma}=C_{N,\gamma,0}=A_{N,\gamma,1},
\]
or equivalently that
\[C_{N,\gamma}-A_{N,\gamma}=A_{N,\gamma,1}-A_{N,\gamma,0}.\]
This fact together with \eqref{A1-A0} shows that the inequality $C_{N,\gamma} > A_{N,\gamma}$ (namely the effect of the curl-free-improvement) holds  
as far as the right-hand side of \eqref{A1-A0} is strictly negative,   
or equivalently 
\[ 
	\left|\gamma-\tfrac{N+4}{6}\right|< \tfrac{1}{3}\sqrt{N^2-N+1}.
\]
For example, this is the case if $\gamma=0$ and $N\ge5$, whence we have  \[C_{N,0}=C_{N,0,0}=A_{N,0,1} = \tfrac{(\frac{N^2}{4}-1)^2}{\frac{N^2}{4}-N+3} > \tfrac{N^2}{4}=A_{N,0,0} = A_{N,0}\] 
for all $N \ge 5$. 

We may understand the above phenomenon in the following way: 
we can choose a sequence $\{{\bm w}_n\}_{n\in\mathbb{N}}$ of vector fields 
by the formula 
\[
{\bm w}_n({\bm x})=\(f_{n,1}(|{\bm x}|),\cdots,f_{n,N}(|{\bm x}|)\)\qquad(n=1,2,\cdots)
\]
in order that
\begin{equation}
 \label{RHtoA0} \frac{\int_{\mathbb{R}^N}|\triangle {\bm w_n}|^2|{\bm x}|^{2\gamma}dx}{\int_{\mathbb{R}^N}|\nabla {\bm w}_n|^2|{\bm x}|^{2\gamma-2}dx}\to A_{N,\gamma,0}
  \qquad(n\to\infty).
\end{equation}
In order for ${\bm w}_n$ to be curl-free, it must hold that
\[
 \frac{\partial f_{n,j}}{\partial x_k}-\frac{\partial f_{n,k}}{\partial x_j}
 =\frac{\partial {r}}{\partial x_k}f_{n,j}'-\frac{\partial{r}}{\partial x_j}f_{n,k}'=\frac{x_k}{{r}}f_{n,j}'-\frac{x_j}{{r}}f_{n,k}'=0, \quad \forall j, k \in \{1,\cdots, N\},
\]
where the notation $(\cdots)'$ denotes the derivative of a one-dimensional function. 
This fact implies that $f_{n,k}'=(x_k/x_j)f_{n,j}'$ for all $j\ne k$, and hence that  \[f_{n,j}={\rm const} \qquad  \forall j\in\{1,\cdots,N\}
\] from the radial symmetry of the function ${\bm x}\mapsto f_{n,j}(|{\bm x}|)$. Consequently,  we have $|\triangle {\bm w}_n|=|\nabla{\bm w}_n|\equiv 0$. 
This phenomenon indicates that there may be no curl-free sequence $\{{\bm w}_n\}_{n\in\mathbb{N}}$ satisfying \eqref{RHtoA0}, which we can naturally interpret as a result of the inequality 
$C_{N,\gamma}>A_{N,\gamma}$.

Incidentally, let us further consider the case
\[\begin{cases} 
1-\frac{N}{2}+\sqrt{N+1}\le\gamma\le\frac{N+4}{6}+\frac{1}{3}\sqrt{N^2-N+1}&\text{for }\ 3\le N\le 11\vspace{0.5em}
\\\left|\gamma-\frac{N+4}{6}\right|\le \frac{1}{3}\sqrt{N^2-N+1}&\text{for }\ N\ge 12
\end{cases}
\]
which ensures both $C_{N,\gamma}=C_{N,\gamma,0}$ and 
\[H_{N,\gamma-1}
=\(\gamma + \tfrac{N}{2}-2\)^2 + N-1\]from \eqref{HNg}. 
(For $\gamma=0$, this is the case for all $N\ge8$.) 
Then a successive application of Rellich-Hardy inequality and  Hardy-Leray inequality  reproduces  Rellich-Leray inequality:  for all curl-free fields ${\bm u}\in \mathcal{D}_{\gamma-1}(\mathbb{R}^N)$, it holds  that
\begin{alignat*}{3}  
	\int_{\mathbb{R}^N}|\triangle {\bm u}|^2|{\bm x}|^{2\gamma}dx& \ge C_{N,\gamma}\int_{\mathbb{R}^N}|\nabla {\bm u}|^2|{\bm x}|^{2\gamma-2}dx \qquad\qquad  (\text{from \eqref{RH_cf}})
	\\&\ge C_{N,\gamma}H_{N,\gamma-1} \int_{\mathbb{R}^N}\frac{|{\bm u}|^2}{|{\bm x}|^2}|{\bm x}|^{2\gamma-2}dx \quad 
\(\begin{array}{ll}\small
\text{from \eqref{HL}}\\\small
\text{with $\gamma$ replaced by $\gamma-1$} \end{array}\)
	\\&=C_{N,\gamma,0}\(\(\gamma+\tfrac{N}{2}-2\)^2+N-1\)\int_{\mathbb{R}^N}\frac{|{\bm u}|^2}{|{\bm x}|^2}|{\bm x}|^{2\gamma-2}dx 
 \\&=\((\gamma-1)^2-\tfrac{N^2}{4}\)^2\int_{\mathbb{R}^N}\frac{|{\bm u}|^2}{|{\bm x}|^2}|{\bm x}|^{2\gamma-2}dx
	\\&\ge B_{N,\gamma}\int_{\mathbb{R}^N}\frac{|{\bm u}|^2}{|{\bm x}|^4}|{\bm x}|^{2\gamma}dx 
\end{alignat*}
with $B_{N,\gamma}$ given by \eqref{RL}. 
Therefore, even under the curl-free constraint, Rellich-Hardy inequality bridges between \tcr{Hardy-Leray} and Rellich-Leray inequalities, and so serves as a stronger version of the Rellich-Leray inequality. 


\section{Completion of the proof of Lemma \ref{lemma:Q/P}}
\label{sec:Q/P}

In this section, we prove Lemma \ref{lemma:Q/P} for the case $\gamma>1$ (\tcr{or equivalently} $\lambda<1-\frac{N}{2}$) with the aid of Maxima, by a similar technique to the former case.
More precisely, our goal is to show the two inequalities:
 \begin{align*}
  &\frac{1}{\tau}\(\frac{Q_1(\tau,{a})}{P_1(\tau,{a})}-\frac{Q_1(0,{a})}{P_1(0,{a})}\)\ge \frac{1}{2}\qquad\text{when } N\ge3,
\\&\frac{1}{\tau}\(\frac{Q_1(\tau,{a})}{P_1(\tau,{a})}-\frac{Q_1(0,{a})}{P_1(0,{a})}\)\ge \frac{1}{3}\qquad\text{when }N=2 
 \end{align*}
for all ${a}\in\{\alpha_\nu\}_{\nu\in\mathbb{N}}$,
which clearly satisfies the conclusion of the lemma.
\subsection{The case $N\ge3$}
By adding $1/2$ to both sides of \eqref{q/p:1}, we have
\begin{align*}
 \frac{1}{\tau}&\(\frac{Q_1(\tau,{a})}{P_1(\tau,{a})}-\frac{Q_1(0,{a})}{P_1(0,{a})}\)-\frac{1}{2}
\\&= (2 \lambda+N-2) \frac{ \ G_0({a})+G_1({a}) \tau}{P_1(0,{a})P_1(\tau,{a})}+\frac{1}{2}
 \\&=  \frac{ (2 \lambda+N-2)\( G_0({a})+G_1({a}) \tau\)}{P_1(0,{a})\Big(P_1(0,{a})+\tau^2+\big(2\({a}+\lambda^2+\lambda\)+N\big)\tau\Big)}+\frac{1}{2}
 \\&=\frac{\tau^2 P_1(0,{a})+\tau E_1({a})+E_0({a})}{2P_1(\tau,{a})P_1(0,{a})},
\tl{q/p:2}  
\end{align*}
where
\begin{align*}
\tl{E1}  
 E_1({a})&:=2(2\lambda+N-2)G_1({a})+\(2({a}+\lambda^2+\lambda)+N\)P_1(0,{a}),
\\
\tl{E0}E_0({a})&:=2\(2\lambda+N-2\)G_0({a})+\(P_1(0,{a})\)^2
 \end{align*}
Then it suffices to check that
 \begin{align*}
  E_1({a})&\ge0\quad\text{ and }\quad E_0({a})\ge0\qquad\forall{a}\in\{\alpha_\nu\}_{\nu\in\mathbb{N}}
\intertext{\qquad or 
that}
\tl{E1>0}E_1(\alpha_1+{s})&\ge0\quad\text{ and }\quad E_0(\alpha_1+{s})\ge0\qquad\forall{s}\ge0.
 \end{align*}
In the following, let us check the two inequalities in \eqref{E1>0}.

\subsubsection*{Proof of $E_1(\alpha_1+{s})\ge0$ } 
A direct computation from \eqref{poly:P} yields
\begin{align*}
 P_1(0,\alpha_1+{s})&=(\alpha_1+{s})^2+\(2(\lambda^2-\lambda)-N\)(\alpha_1+{s})
 \\&\quad +(\lambda+1)^2\(\lambda^2+N-1\)
 \\&=s^2+s\(2\lambda^2-2\lambda+N-2\)+\lambda^2\((\lambda+1)^2+3N-3\).
\tl{Taylor:P}
\end{align*}
Substitute \eqref{Taylor:G1} and \eqref{Taylor:P} into \eqref{E1} with ${a}=\alpha_1+{s}$; then a straightforward computation yields
\[
 \begin{split}
  E_1(\alpha_1+{s})&=
2(2\lambda+N-2)G_1(\alpha_1+{s})+\(2(\alpha_1+{s}+\lambda^2+\lambda)+N\)P_1(0,\alpha_1+{s})
\\&=2(2\lambda+N-2)\( \begin{array}{l}
		       s^2(2\lambda+N)
		      \vspace{0.25em}		      \\ +\,s\((N-1)(2\lambda+N-2)+(\lambda-1)^2(2\lambda+N)\)\vspace{0.25em}\\ +\,2(N-1)\lambda^2(2\lambda+N-1)\end{array}
\)
\\&\quad+\Big(2{s}+2\(\lambda^2+\lambda\)+3N-2\Big)
\(\begin{array}{l}
  s^2+s\(2\lambda^2-2\lambda+N-2\)
\vspace{0.25em}
  \\ +\,\lambda^2\((\lambda+1)^2+3N-3\)\end{array}
\)
  \\&=2{s}^3+{s}^2E_{12}(\lambda)+{s}E_{11}(\lambda)+\lambda^2E_{10}(\lambda)
\qquad\forall{s}\ge0
 \end{split}
\]
as a Taylor series of $E_1({a})$ at $a=\alpha_1$, where we set
\[
 \begin{split}
  E_{12}(\lambda)&:=2(2\lambda+N-2)(2\lambda+N)+2\(\lambda^2+\lambda\)+3N-2
\\&\quad\; +2\(2\lambda^2-2\lambda+N-2\)
\\&\;= 14\lambda^2+2(4N-5)\lambda+(N+2)(2N-3)
  \\&\;=
  2\(2\lambda+N-\tfrac{5}{4}\)^2+6\lambda^2+6(N-2)+\tfrac{23}{8}
,\\
E_{11}(\lambda)
&:=2(2\lambda+N-2)\Big((N-1)(2\lambda+N-2)+(\lambda-1)^2(2\lambda+N)\Big)
  \\&\quad\; +2\lambda^2((\lambda+1)^2+3N-3)+\Big(2\(\lambda^2+\lambda\)+3N-2\Big)\(2\lambda^2-2\lambda+N-2\)
\\&\; =14\lambda^4+4(2N-5)\lambda^3+2N(N+1)\lambda^2
\\&\quad\; +4(N-1)(N-2)\lambda +\(2N^2-N+2\)(N-2)
  \\&\;=6\lambda^4+\tfrac{1}{2}\lambda^2\Big((4\lambda+2N-5)^2+16N-9\Big)
\\&\quad\; +(N-2)\Big((2\lambda+N-1)^2+N^2+N+1\Big)
,
\\ E_{10}(\lambda)&:=
4(N-1)(2\lambda+N-2)(2\lambda+N-1)
\\&\quad\; +\(2(\lambda^2+\lambda)+3N-2\)\((\lambda+1)^2+3N-3\)
  \\&\;=(\lambda+1)^4+(\lambda+1)^2\lambda^2+(N-1)\(\(5\lambda+\tfrac{8N-6}{5}\)^2+\tfrac{36N^2+21N+89}{25}\).
 \end{split}
\]
Notice that $E_{12}(\lambda),E_{11}(\lambda)$ and $E_{10}(\lambda)$ are all nonnegative; then $E_1(\alpha_1+{s})\ge 0$ holds from ${s}\ge0$, which proves the desired inequality. 
\subsubsection*{Proof of $E_0(\alpha_1+{s})\ge0$ } 
Substitute \eqref{Taylor:G0} and \eqref{Taylor:P} into \eqref{E0} with ${a}=\alpha_1+{s}$; then a straightforward computation yields
\begin{align*}
 E_0(\alpha_1+{s})&=2\(2\lambda+N-2\)G_0(\alpha_1+{s})+\(P_1(0,\alpha_1+{s})\)^2
\\&=2(2\lambda+N-2)
\(\begin{array}{l}
      s^3(2\lambda+N)+s^2\mathcal{G}_2(\lambda) +s\mathcal{G}_1(\lambda)
  \vspace{0.25em}\\ +\,2(N-1)\lambda^4(2\lambda+N-1)\end{array}
\)
 \\&\quad+\Big(s^2+s\(2\lambda^2-2\lambda+N-2\)+\lambda^2\((\lambda+1)^2+3N-3\)\Big)^2
\\  &={s}^4+{s}^3E_{03}(\lambda)
  +{s}^2E_{02}(\lambda)+2\hspace{0.1em}{s}E_{01}(\lambda)
 +\lambda^4E_{00}(\lambda)
\qquad\forall{s}\ge0\quad 
\tl{E0_s}
 \end{align*}
as a Taylor series of $E_0({a})$ at ${a}=\alpha_1$,  
where we set
 \begin{align*}
  E_{03}(\lambda)&:=2(2\lambda+N-2)(2\lambda+N)+2\(2\lambda^2-2\lambda+N-2\)
  \\&\;=\tfrac{1}{2}(4\lambda+2N-3)^2+4\lambda^2+4(N-3)+\tfrac{7}{2},
  \\
E_{02}(\lambda)
  &:=2(2\lambda+N-2)\mathcal{G}_2(\lambda)+\(2\lambda^2-2\lambda+N-2\)^2
  \\&\quad
  +2\lambda^2\((\lambda+1)^2+3N-3\),\tl{E02}
  \\
\tl{E01}  E_{01}(\lambda)
  &:=(2\lambda+N-2)\mathcal{G}_1(\lambda)
  +\lambda^2\((\lambda+1)^2+3N-3\)\(2\lambda^2-2\lambda+N-2\),
  \\
  E_{00}(\lambda)&:=4(N-1)(2\lambda+N-2)(2\lambda+N-1)+\((\lambda+1)^2+3N-3\)^2
  \\&\;=(\lambda+1)^4+(N-1)\Big((4\lambda+2N-3)^2+6(\lambda+1)^2+9N-10
  \Big).
 \end{align*}
 Hence, in order to show $E_0(\alpha_1+{s})\ge0$ $(\forall{s}\ge0)$,  it suffices to check the nonnegativity of $\{E_{0k}(\lambda)\}_{k=0,1,2,3}$. Since it is clear that $E_{03}(\lambda)\ge0$ and $E_{00}(\lambda)\ge0$, all that is left is to show the two inequalities
\begin{equation}
 E_{02}(\lambda)\ge0\quad\text{ and }\quad E_{01}(\lambda)\ge0.
\label{E02E01}
\end{equation}
To this end, substitute \eqref{G2} into \eqref{E02}, then we get
\[
\begin{split}
 E_{02}(\lambda)&=2(2\lambda+N-2)\Big( (2\lambda+N-2)\((\lambda+1)^2+\lambda^2+N+3\)+(N-1)^2+9\Big)
\\&\quad+\(2\lambda^2-2\lambda+N-2\)^2
 +2\lambda^2\((\lambda+1)^2+3N-3\)
\\&=22\lambda^4+4(4N-5)\lambda^3+2(N+4)(2N+1)\lambda^2+4N\(4N-3\)\lambda
\\&\quad +\(4N^2+N+2\)(N-2)
 \\&=2\lambda^4+5\lambda^2\(2\lambda+\tfrac{4N}{5}-1\)^2
+N\(5\lambda+\tfrac{8N-6}{5}\)^2
 \\&\quad +\lambda^2\(\tfrac{4}{5}N^2+N+3\)
 +\tfrac{1}{25}(N-3)\(36N^2+29N+51\)+\tfrac{53}{25}.
\end{split}
\]
Since $N\ge3$, this result implies $E_{02}(\lambda)\ge0$, whence we have proved the first inequality of \eqref{E02E01}. In the same way, the proof of the second could also be carried out by the method of completing the square (see Appendix); however, we work here in a different way.  
From the assumption $\lambda<1-\frac{N}{2}$, we see that  $\lambda$ can be parameterized as
\[
 \lambda=1-\frac{N}{2}-{s},\qquad{s}>0.
\]
Then we have
\[
\begin{split}
  \mathcal{G}_{1}(\lambda)&=\mathcal{G}_{1}\(1-\tfrac{N}{2}-{s}\)
\\&=(-6-2{s})\(1-\tfrac{N}{2}-{s}\)^4+2\(1-\tfrac{N}{2}-{s}\)^2\(4{s}-2-2N+N^2\)+2N^2(1-{s})
\end{split}
\]
from \eqref{G1}. Substituting this expression into \eqref{E01}, we compute
\[
 \begin{split}
  E_{01}(\lambda)&=E_{01}\(1-\tfrac{N}{2}-{s}\)
  \\&=-2{s}\(\begin{array}{l}
	     (-6-2{s})\(1-\tfrac{N}{2}-{s}\)^4+2N^2(1-{s})\vspace{0.25em}\\ +\,2\(1-\tfrac{N}{2}-{s}\)^2\(4{s}-2-2N+N^2\)\end{array}
\)
  \\&\quad +\(1-\tfrac{N}{2}-{s}\)^2
  \Big({s}^2+(N-4){s}+\tfrac{1}{4}(N+2)^2\Big)
  \Big(2{s}^2+2(N-1){s}+\tfrac{1}{2}\(N^2-4\)\Big)
\\&=6{s}^6+2(7N-9){s}^5+\tfrac{1}{2}\big((N-3)(27N+25)+55\big){s}^4
 \\&\quad  +\Big((N-3)\(7N^2+2N-14\)+34\Big){s}^3
  \\&\quad +\tfrac{1}{8}\Big((N-3)^2\(17N^2+46N+107\)+740(N-3)+601\Big){s}^2
  \\&\quad +\Big((N-4)\(\tfrac{3}{8}N^2(N^2+N+4)+11N+14\)+86\Big){s}+\tfrac{1}{32}\(N^2-4\)^3
 \end{split}
\]
as a Taylor series of  $E_{01}(\lambda)$ at $\lambda=1-\frac{N}{2}$. When $N\ge4$, 
the coefficients of the powers of ${s}$ are all positive,  and hence $E_{01}(\lambda)\ge0$ holds true.  Moreover, even when  $N=3$, we directly see that
\[
\begin{split}
E_{01}(\lambda)&=E_{01}\(1-\tfrac{3}{2}-{s}\)
 =6{s}^6+24{s}^5+\tfrac{55}{2}{s}^4+34{s}^3+\underbrace{\tfrac{601}{8}{s}^2-15{s}+\tfrac{125}{32}\strut}_{\quad >\,0}
\\&\ge0.
\end{split}
\]
Therefore, we have obtained the second inequality of \eqref{E02E01} for all $N\ge3$, as desired. 
\subsection{The case $N=2$}
Applying the same calculation in \eqref{q/p:1} to the case $N=2$, we have the identity
\begin{align*}
 \frac{1}{\tau}\(\frac{Q_1(\tau,{a})}{P_1(\tau,{a})}-\frac{Q_1(0,{a})}{P_1(0,{a})}\)-1=\,  \frac{ 2\lambda\( G_0({a})+G_1({a}) \tau\)}{P_1(0,{a})P_1(\tau,{a})},
\end{align*}
where
\[
  P_1(\tau,{a})=
  \tau^2+2\({a}+\lambda^2+\lambda+1\)\tau+\({a}+\lambda^2-\lambda-1\)^2+(4\lambda+3)\lambda^2
\]
is the same as in \eqref{poly:P} with $N=2$,  
and where $G_0({a})$ and $G_0({a})$ are the same as in \eqref{G0G1} with $N=2$. 
Adding $2/3$ to both sides of the above identity, we then get
\[
\begin{split}
  \frac{1}{\tau}&\(\frac{Q_1(\tau,{a})}{P_1(\tau,{a})}-\frac{Q_1(0,{a})}{P_1(0,{a})}\)-\frac{1}{3}
 \\&=\frac{2}{3}\(1+  \frac{ 3\lambda \(G_0({a})+G_1({a}) \tau\)}{\Big(\tau^2+2\big({a}+\lambda^2+\lambda+1\big)\tau\Big)P_1(0,{a})+\(P_1\(0,{a}\)\)^2}\)
 \\&=\frac{2}{3}\cdot \frac{\tau^2P_1(0,{a})+2\tau F_1({a})+F_0({a})}{P_1(0,{a})P_1(\tau,{a})},
\end{split}
\]
where
\begin{align}
 \label{F1} F_1({a})&:=\({a}+\lambda^2+\lambda+1\)P_1(0,{a})+\tfrac{3}{2}\lambda G_1({a})
 ,\\
 \label{F0} F_0({a})&:=\(P_1(0,{a})\)^2+3\lambda G_0({a}).
\end{align}
Thus, it is enough to show that
\[
 F_1({a})\ge0\quad\text{ and }\quad
F_0({a})\ge0\qquad\forall{a}\in\{\alpha_\nu=\nu(\nu+1)\}_{\nu\in\mathbb{N}}
\]
under the assumption $\lambda<1-\frac{N}{2}=0$. 

\subsubsection*{Proof of $F_1({a})\ge0$ for ${a}\in\{\alpha_\nu\}_{\nu\in\mathbb{N}}$}It suffices to check that $F_1(\alpha_1+{s})\ge0$, i.e., \[F_1(1+{s})\ge0\qquad   \forall{s}\ge0.\] In order to compute the left-hand side, apply $N=2$ to \eqref{Taylor:P} and \eqref{Taylor:G1}; then we get
\begin{align*}
 \tl{Taylor:P,n=2} P_1(0,1+{s})&={s}^2+2{s}\lambda(\lambda-1)+\lambda^2\((\lambda+1)^2+3\),
 \\
 \tfrac{1}{2} G_1(1+{s})&=(\lambda+1){s}^2+{s}\(\lambda+(\lambda-1)^2(\lambda+1)\)+\lambda^2(2\lambda+1).
\end{align*}
Substitute them into \eqref{F1} and \eqref{F0} with ${a}=1+{s}$, then we have
\[
 \begin{split}
  F_1(1+{s})&=\({s}+\lambda^2+\lambda+2\)P_1(0,1+{s})+\tfrac{3}{2}\lambda G_1(1+{s})
  \\&=\({s}+\lambda^2+\lambda+2\)\Big({s}^2+2{s}\lambda(\lambda-1)+\lambda^2\((\lambda+1)^2+3\)\Big)
  \\&\quad +3\lambda\Big((\lambda+1){s}^2+{s}\(\lambda+(\lambda-1)^2(\lambda+1)\)+\lambda^2(2\lambda+1)\Big)
  \\&={s}^3+2\(3\lambda^2+\lambda+1\){s}^2+\lambda(6\lambda-1)\(\lambda^2+1\){s}
  \\&\quad +\lambda^2\(\lambda^4+3\lambda^3+14\lambda^2+11\lambda+8\)
  \\&={s}^3+\(6\(\lambda+\tfrac{1}{6}\)^2+\tfrac{11}{6}\){s}^2-\lambda(1-6\lambda)\(\lambda^2+1\){s}
  \\&\quad +\lambda^2\((\lambda+1)^2
 \(\lambda+\tfrac{1}{2}\)^2+\tfrac{27}{4}(\lambda+1)^2+(2\lambda-1)^2\)
 \end{split}
\]
In view of the right-hand side, the coefficients of the powers of ${s}$ are all positive since $\lambda<0$, which implies that $F_1({a})\ge0$ for all ${a}\ge\alpha_1$, as desired.
\subsubsection*{Proof of $F_0({a})\ge0$ for ${a}\in\{\alpha_\nu\}_{\nu\in\mathbb{N}}$}
First of all, let us compute $F_0(1+{s})$. To this end, apply $N=2$ to \eqref{Taylor:G0}, \eqref{G2} and \eqref{G1};  then we get
\[
\begin{split}
 \tfrac{1}{2}G_0(1+{s})&=(\lambda+1){s}^3+\tfrac{1}{2}{s}^2\mathcal{G}_2(\lambda)+\tfrac{1}{2}{s}\mathcal{G}_1(\lambda)+\lambda^4(2\lambda+1)
 \\&=(\lambda+1){s}^3+{s}^2\(2\lambda^3+2\lambda^2+6\lambda+5\)
 \\&\quad +{s}(\lambda-1)\(\lambda^4-2\lambda^3+6\lambda^2-8\lambda-4\)+\lambda^4(2\lambda+1).
\end{split}
\]
Substitute this expression and \eqref{Taylor:P,n=2} into \eqref{F0} with ${a}=1+{s}$, then we have
 \begin{align*}
    F_0(1+{s})&=\(P_1(0,1+{s})\)^2+3\lambda G_0(1+{s})
  \\&=\Big({s}^2+2{s}\lambda(\lambda-1)+\lambda^2\((\lambda+1)^2+3\)\Big)^2
  \\&\quad +3\lambda\Big(2(\lambda+1){s}^3+{s}^2\mathcal{G}_2(\lambda)+{s}\mathcal{G}_1(\lambda)+2\lambda^4(2\lambda+1)\Big)
  \\&=\Big({s}^2+2{s}\lambda(\lambda-1)+\lambda^2\((\lambda+1)^2+3\)\Big)^2\\&\quad +6\lambda\(\begin{array}{l}
   (\lambda+1){s}^3+{s}^2\(2\lambda^3+2\lambda^2+6\lambda+5\)+\lambda^4(2\lambda+1)
 \vspace{0.25em}\\ +\,{s}(\lambda-1)\(\lambda^4-2\lambda^3-6\lambda^2-8\lambda-4\)  \end{array}\)
  \\&={s}^4+2\lambda(5\lambda+1){s}^3+2\lambda(9\lambda^3+4\lambda^2+24\lambda+15){s}^2
  \\&\quad +2\lambda(\lambda-1)\(5\lambda^4-2\lambda^3-10\lambda^2-24\lambda-12\){s}
  \\&\quad +\lambda^4\((\lambda+1)^4+9(\lambda+1)^2+9\lambda^2+6\)
\tl{Taylor:F0}
\end{align*}
as a Taylor series of $F_0({a})$ at ${a}=\alpha_1=1$. Then it is clear that $F_0(\alpha_1)=F_0(1)\ge0$, and hence our goal is reduced to showing $F_0({a})\ge0$ for ${a}\ge\alpha_2$. For this purpose,  it suffices to check that
\[
F_0(\alpha_2+{s})= F_0(4+{s})\ge0\qquad\forall{s}\ge0.
\]
To this end, notice from \eqref{Taylor:F0} that
\[\begin{split}
   \frac{F_0(1+{s})}{{s}}
   &\ge \frac{F_0(1+{s})-F_0(1)}{{s}}
   \\&={s}^3+2\lambda
\(\begin{array}{l}
(5\lambda+1){s}^2+\(9\lambda^3+4\lambda^2+24\lambda+15\){s}
\vspace{0.25em}
\\ +\,(\lambda-1)\(5\lambda^4-2\lambda^3-10\lambda^2-24\lambda-12\)
\end{array}\)
\end{split}
\] 
holds for all ${s}>0$. Replacing ${s}$ by $3+{s}$ on both sides, we get
\[
 \begin{split}
  \frac{F_0(4+{s})}{3+{s}}&\ge ({s}+3)^3
  +2\lambda\(\begin{array}{l}
(5\lambda+1)({s}+3)^2
\vspace{0.25em}
\\+\,\(9\lambda^3+4\lambda^2+24\lambda+15\)({s}+3)
\vspace{0.25em}
\\ +\,(\lambda-1)\(5\lambda^4-2\lambda^3-10\lambda^2-24\lambda-12\)
	     \end{array}\)
  \\&=s^3+\((\lambda+1)^2+9\lambda^2+8\)s^2
  \\&\quad+\Big(14\lambda^4+\lambda^2\(4(\lambda+1)^2+83\)+21(\lambda+1)^2+6\Big)s
  \\&\quad +3\lambda^6+\lambda^4\(7(\lambda-1)^2+27\)+\lambda^2(2\lambda-1)^2
\\&\quad +16(4\lambda+1)^2+(\lambda+2)^2+7
\\&\ge0,
 \end{split}
\]
and therefore $F_0(4+{s})\ge0$ for all ${s}\ge0$, as desired.

\section*{Appendix}
Here we give another proof of the second inequality $E_{01}(\lambda)\ge0$ of \eqref{E02E01}, by the method of completing the square. 

Substitute \eqref{G1} into \eqref{E01}, then we have
\[\begin{split}
   E_{01}(\lambda)&=(2\lambda+N-2)\(\begin{array}{l}  \lambda^4(2\lambda+N-8)+\,N^2(2\lambda+N)\vspace{0.25em}\\+\,2\lambda^2\(2-4\lambda-4N+N^2\)  \end{array}\)\\&\quad +\lambda^2\((\lambda+1)^2+3N-3\)\(2\lambda^2-2\lambda+N-2\)  \\&=2\lambda^6+\tfrac{1}{4}\Big((4\lambda+2N-9)^2+8N-25\Big)\lambda^4\\&\quad +\Big(N(4N-1)+2\Big)\lambda^2+N^2(N-1)(\lambda+2)^2+4N^2\\&\quad +(N-6)\Big((2\lambda+N-1)^2\lambda^2+N^3\Big)
 \end{split}\]
by a straightforward computation, whence we get
\[ E_{01}(\lambda)\ge0\quad\text{ for all }N\ge6. \]
Moreover, the same also applies to the case $N\in\{3,4,5\}$, as can be directly verified by the following calculation:
\[\begin{split}
\text{when }N=3,\quad   
E_{01}(\lambda)&=6\lambda^6-6\lambda^5+10\lambda^4-24\lambda^3+41\lambda^2+72\lambda+27 
\\&=\tfrac{1}{63}\(\lambda+\tfrac{2}{3}\)^2\(40+\(28\lambda-\tfrac{145}{3}\)^2\) \\&\quad +\(\lambda+\tfrac{2}{3}\)^4\(\tfrac{1}{18}+\tfrac{1}{6}(6\lambda-11)^2\) +\tfrac{26}{9}\(3\lambda+\tfrac{11}{9}\)^2+\tfrac{1405}{729};
\\\text{when }N=4,\quad   
E_{01}(\lambda)&=6\lambda^6-2\lambda^5-6\lambda^4-24\lambda^3+92\lambda^2+192\lambda+128 
   \\&=\(2\lambda^3-\tfrac{1}{2}\lambda^2-8\lambda-8\)^2+2\lambda^6 +\tfrac{103}{4}\lambda^4\\&\quad +4\lambda^2+16(\lambda+2)^2;\\\text{when }N=5,\quad   
E_{01}(\lambda)&=6\lambda^6+2\lambda^5-16\lambda^3+181\lambda^2+400\lambda+375  
   \\&=5\(\lambda^3-\tfrac{3}{2}\)^2+\lambda^2\(\lambda^2+\lambda-\tfrac{1}{2}\)^2  \\&\quad +(\tfrac{25}{2}\lambda+16)^2+\tfrac{49}{2}\lambda^2+\tfrac{431}{4}. 
\end{split}\]
Therefore, we have obtained $E_{01}(\lambda)\ge0$ for all $N\ge3$, as desired.
%
%

\vspace{1em}\noindent
{\bf Acknowledgments.}
\\
The second author (F.T.) was supported by JSPS Grant-in-Aid for Scientific Research (B), No.19H01800.
This work was partly supported by Osaka City University Advanced Mathematical Institute 
(MEXT Joint Usage/Research Center on Mathematics and Theoretical Physics JPMXP0619217849).


\begin{thebibliography}{10}

\bibitem{Beckner}
W.~Beckner, \emph{{Weighted inequalities and Stein-Weiss potentials}}, Forum
  Mathematicum \textbf{20} (01 Jul. 2008), no.~4, 587--606.

\bibitem{Caldiroli}
P.~Caldiroli and R.~Musina, \emph{Rellich inequalities with weights}, Calculus
  of Variations and Partial Differential Equations \textbf{45} (2012), no.~1-2,
  147--164.

\bibitem{Cazacu_2019}
C.~Cazacu, \emph{{A new proof of the Hardy-Rellich inequality in any
  dimension}}, Proceedings of the Royal Society of Edinburgh: Section A
  Mathematics (2019), 1--11.

\bibitem{Costin-Mazya}
O.~Costin and V.~G. Maz'ya, \emph{{Sharp Hardy--Leray inequality for
  axisymmetric divergence-free fields}}, Calculus of Variations and Partial
  Differential Equations \textbf{32} (2008), no.~4, 523--532.

\bibitem{GM}
N.~A. Ghoussoub and A.~Moradifam, \emph{{Bessel pairs and optimal Hardy and
  Hardy--Rellich inequalities}}, Mathematische Annalen \textbf{349} (2011),
  no.~1, 1--57.

\bibitem{CF_MAAN}
N.~Hamamoto and F.~Takahashi, \emph{{Sharp Hardy-Leray and Rellich-Leray
  inequalities for curl-free vector fields}}, Mathematische Annalen (2019), 
https://doi.org/10.1007/s00208-019-01945-x

\bibitem{CF_Re}
\bysame, \emph{{Sharp Hardy-Leray inequality for curl-free fields with a
  remainder term}}, 
Journal of Functional Analysis 280 (2021), no.~1, 108790 (24 pages)


\bibitem{Hardy}
G.~H. Hardy, \emph{{Note on a theorem of Hilbert}}, Mathematische Zeitschrift
  \textbf{6} (1920), no.~3--4, 314--317.

\bibitem{Ladyzhenskaya}
O.~A. Ladyzhenskaya, \emph{The mathematical theory of viscous incompressible
  flow}, Mathematics and its Applications, vol.~2, Gordon and Breach, Science
  Publishers, New York-London-Paris, 1969, Translated from the Russian by
  Richard A. Silverman and John Chu.

\bibitem{Leray}
J.~Leray, \emph{\'{E}tude de diverses \'equations int\'egrales non lin\'eaires
  et de quelques probl\`emes que pose l'hydrodynamique}, Journal de
  Math\'ematiques Pures et Appliqu\'ees \textbf{12} (1933), 1--82 (French).

\bibitem{Rellich}
F.~Rellich, \emph{{Halbbeschr\"{a}nkte Differentialoperatoren h\"{o}herer
  Ordnung}}, {Proceedings of the International Congress of Mathematicians,
  1954, Amsterdam}, vol. I\hspace{-0.08em}I\hspace{-0.08em}I, Erven P.
  Noordhoff N.V., Groningen; North-Holland Publishing Co., Amsterdam, 1956,
  pp.~243--250 (German).

\bibitem{Tertikas-Z}
A.~Tertikas and N.~B. Zographopoulos, \emph{{Best constants in the
  Hardy--Rellich inequalities and related improvements}}, Advances in
  Mathematics \textbf{209} (2007), no.~2, 407--459.

\end{thebibliography}

\providecommand{\bysame}{\leavevmode\hbox to3em{\hrulefill}\thinspace}
\providecommand{\MR}{\relax\ifhmode\unskip\space\fi MR }
\providecommand{\MRhref}[2]{%
  \href{http://www.ams.org/mathscinet-getitem?mr=#1}{#2}
}
\providecommand{\href}[2]{#2}

\end{document}